\newtheorem{theorem}{Theorem}[section]
\newtheorem{lemma}[theorem]{Lemma}
\newtheorem{corollary}[theorem]{Corollary}
\newtheorem{proposition}[theorem]{Proposition}
\newtheorem{remark0}[theorem]{Remark}
\newtheorem{example0}[theorem]{Example}
\newtheorem{definition}[theorem]{Definition}
\newenvironment{example}{\begin{example0}\rm}{\end{example0}}
\newenvironment{remark}{\begin{remark0}\rm}{\end{remark0}}
\newcommand{\defref}[1]{Definition~\ref{#1}}
\newcommand{\propref}[1]{Proposition~\ref{#1}}
\newcommand{\thmref}[1]{Theorem~\ref{#1}}
\newcommand{\lemref}[1]{Lemma~\ref{#1}}
\newcommand{\corref}[1]{Corollary~\ref{#1}}
\newcommand{\exref}[1]{Example~\ref{#1}}
\newcommand{\remref}[1]{Remark~\ref{#1}}
\def\thetime{\timehour=\time
\divide\timehour by60 \minleft=\timehour \multiply\minleft by -60
\advance\minleft by\time \ifnum\time>720\advance\timehour
by-12\fi\relax
\number\timehour:\ifnum\minleft<10 %
    0\fi\relax\number\minleft
    \ifnum\time>720~pm \else~am\fi}
\def\res{{\mathbf{k}}}
\def\HF{{\operatorname{H\!F}}}
\def\Max{\operatorname{max}}
\def\deg{\operatorname{deg}}
\def\dim{\operatorname{dim}}
\def\ann{\operatorname{Ann}}
\def\length{\operatorname{Length}}
\def\rank{\operatorname{rank}}
\begin{document}

\title[Inverse system of Gorenstein set of points]{
Inverse system of Gorenstein   points in ${\mathbb P^n_{\res}}$}
\author[J. Elias]{J. Elias $^*$}
\thanks{${}^{*}$
Partially supported by
PID2019-104844GB-I00}
\address{Joan Elias
\newline \indent Departament de Matem\`{a}tiques i Inform\`{a}tica
\newline \indent Universitat de Barcelona (UB)
\newline \indent Gran Via 585, 08007
Barcelona, Spain}  \email{{\tt elias@ub.edu}}

\author[M. E. Rossi]{M. E. Rossi $^{**}$}
\thanks{${}^{**}$
Partially supported by PRIN-MIUR 2020355B8Y.\\
\rm \indent 2020 MSC:  Primary
14M05, 13H10; Secondary 13A02}

\address{Maria  Evelina  Rossi
\newline \indent Dipartimento di Matematica
\newline \indent Universit\`{a} di genova
\newline \indent Via Dodecaneso 35, 16146 Genova, Italy}
\email{{\tt rossim@dima.unige.it}}

\date{\today }

\begin{abstract}   Given a set of distinct points  $X=\{P_1, \dots,P_r\} $ in   $ \mathbb P^n_{\res}, $ in this paper we characterize being $X$   arithmetically Gorenstein through the ``special" structure of the inverse system of the defining ideal $I(X) \subseteq R= \res[X_0, \dots, X_n].$ We describe the corresponding Zariski locally closed subset   of $\mathbb (\mathbb P_{\res}^{n})^r$ parametrized by the coordinates of the points. Several examples are given to show the effectiveness of the results.  As a consequence of the main result we characterize the Artinian Gorenstein rings which are the Artinian reduction  of Gorenstein points.  The problem is of  interest  in several areas of research,  among others  the $G$-linkage, the Waring decomposition, the smoothability of  Artinian algebras and the identifiability of symmetric tensors.

\end{abstract}

\maketitle


\section{Introduction}

 Let  $X=\{P_1, \dots,P_r\} $ be a set of non degenerate distinct points in  $ \mathbb P^n_{\res}  $    where $\res$ is an algebraically closed field of characteristic zero.
The defining ideal $I(X) \subseteq  R= \res{}[x_0, \dots, x_n]$  is  saturated and  $R/I(X) $ is a  Cohen-Macaulay  one-dimensional  standard graded $\res$-algebra.
Little is known   how to obtain a reduced zero-dimensional scheme $X  $ which is arithmetically  Gorenstein, especially if $X$ is not a complete intersection.
Many authors give geometric constructions of some particular families of arithmetically Gorenstein schemes (e.g. \cite{BCC22}, \cite{BDNS04}, \cite{GM97}, \cite{MN03},  \cite{MP97}, \cite{CI12}), but the problem remains open in general.
Notice that  a   set of $r$ {\it{generic }} points  is never Gorenstein, provided  $r\ne n+2$.
Recall that was noted by Eisenbud and Popescu (\cite{EP00}, Theorem 7.3) that a set $X$ of $2n+2$  points in $\mathbb P^n_{\res}  $  is arithmetically Gorenstein if and only if they are self-associated. This is the case of an hyperplane sections of canonically embedded curves.
In general, a  geometric
or algebraic description of arithmetically Gorenstein reduced schemes  is not understood.
For instance this lack is one of the main obstacles in the Gorenstein liaison (or $G$-linkage)  of zero-dimensional schemes.
Recall that two   schemes X and Y in $\mathbb P^n_{\res}$ that are reduced and without common components are directly Gorenstein-linked if their union is arithmetically Gorenstein  (e.g.  \cite{EHS15}, \cite{Har00}, \cite{KMMNP01}).

We can say that our main purpose would be to ``construct" Gorenstein points through the structure of Macaulay's inverse system, rather than to prove that a given set of points is arithmetically Gorenstein.

As a continuation of \cite{ER17}, \cite{ER21}, taking advantage of the knowledge on the inverse system of one-dimensional Gorenstein $\res$-algebras, in this paper  we describe Macaulay's inverse system of Gorenstein points in terms of their coordinates,  see \thmref{char-points-gor}.
We will prove, by analogy with the Artinian case, that the inverse system of Gorenstein points can be characterized in terms of a suitable homogeneous polynomial,  despite the fact that  $I(X)^{\perp}$ is a not finitely generated $R$-module.
It is clear that for being Gorenstein the points must be somehow ``special" and we describe the corresponding Zariski locally closed subset (eventually empty) of $(\mathbb P_{\res}^{n})^r$  parametrizing  the coordinates of the points, see Proposition \ref{steGor} and Remark \ref{zariski}.

Beside the $G$-linkage, the problem we face in this paper is also related to the  study of the Waring problem  and to  the identifiability of symmetric tensors which plays a crucial role in Algebraic Statistic (see \cite{COV17}).
This   because the Artinian reduction of $R/I(X) $ where $X$ is arithmetically Gorenstein, is Gorenstein as well.
The inverse system of any Artinian reduction of  $I(X) $ becomes very concrete.
By Matlis duality the dual module is generated by a homogeneous form $F$  and  $F$ has a  representation as a sum of powers of linear forms corresponding  to the points, see  Proposition \ref{reductionPoints}.
This is the connection with the celebrated  Big Waring Problem and though we will not discuss here about this topic, see    for instance   \cite{Ger96} and \cite{IK99}. Inverse systems of Artinian graded rings coming from points are put to great use also in the theory of splines approximation (e.g. \cite{GS98}) and in the study of Weak Lefschetz Property (e.g. \cite{HSS11}, \cite{MMRN10}).

\vskip 2mm
Let us describe briefly the organization of the paper.
In Section $2$ we review the main results contained  in the   papers \cite{ER17} and \cite{ER21} which are the basic tools in this paper.

In Section $3$ we present one of the main results  of the paper. In particular in Proposition   \ref{G-points}  we specialize  the general notion of $G$-admissibility (see Section $2$)      to the defining ring of a set $X$ of points in the projective space.
Finally in \thmref{char-points-gor} and \remref{Pi} we present effective   conditions  for being $X$ arithmetically Gorenstein  and we describe explicitly  its inverse system.
This result can be
 considered as a strengthening of \cite[Theorem 2.2]{Toh14}.

Taking advantage of Theorem \ref{char-points-gor}, in Section $4$ we describe in an explicit way the Zarisky locally closed locus parameterizing  the   points
$X=\{P_1,\dots, P_r\}\subset  \mathbb P^n_{\res}$  which are arithmetically Gorenstein, see Remark \ref{zariski} and  \propref{steGor}.

As a consequence of the previous results, in Section $5$ we characterize the Artinian Gorenstein rings which can be lifted to Gorenstein points.   Any homogeneous form $F$ of degree $s$ determines via the Macaulay inverse system an Artinian Gorenstein ring, but it is not  true that it is necessarily the Artinian reduction of a set of Gorenstein points.
In other words we ask under which conditions an Artinian ring is the Artinian reduction of a Gorenstein reduced set $X$ of points in $\mathbb P^n_{\res}$.
We   answer this question in  \propref{reductionPoints}.
This was also the main issue in \cite{Toh14}.
From the knowledge on the Waring problem (see \cite{AH00}) we can give a family of Artinian Gorenstein rings which cannot be lifted to Gorenstein points, \propref{NoLifGor}. This problem is also related to the smoothability of Artinian algebras, see for instance \cite{IK99}, \cite{Jel14}, \cite{JKK19} and more recently \cite{Sza22}.

\medskip
Through the paper several examples are given. The computations are performed  by using the computer algebra system  Singular  \cite{DGPS}  and the Singular library {\sc{inverse-syst.lib}}, \cite{E-InvSyst14}.

\medskip
\noindent
{\sc Acknowledgements.}
This  paper was written during the visit of the first author to the Department of Mathematics of Genova University.
He would like to thank  the Department  for its support and hospitality.
The authors thank A. De Stefani, J. Jelisiejew  and R.M. Miró-Roig for their useful comments.
The authors thanks the anonymous referee because his suggestions significantly  improved the presentation of the paper.

\bigskip
\section{Preliminaries}
 Let $V$ be a $\res$-vector space of dimension $n$
with basis    $\langle x_1, \dots, x_n \rangle,$ then we denote  by   $V^* = \langle y_1, \dots, y_n \rangle$   the  dual basis.
Let $R= Sym_{\cdot}^{\res} V = \oplus_{i \ge 0} Sym_i^{\res }V $ be  the  standard graded polynomial ring in $n$ variables over $\res$ and $\Gamma = D_{\bf \cdot}^{\res}(V^*) = \oplus_{i\ge 0} D_i^{\res}(V^*) = \oplus_{i\ge 0}  {\rm Hom}_{\res} (R_i, \res) $    be the graded $R$-module of graded $\res$-linear homomorphisms from $R$ to $\res, $ hence
   $\Gamma \simeq  \res_{DP} [y_1, \dots, y_n] $ the divided power ring.

  We recall   that $\Gamma $ is an  $R$-module with $R$ acting   on  $\Gamma$ by {\it {derivation}}  denoted   by $\circ$. We will think  the elements of $R$  as representing partial differential operators and
the polynomials of $\Gamma$  as the “real” polynomials on which the differential operators act.
This action is sometimes called the “apolarity” action of $R$ on $\Gamma$. We begin with a precise
definition of this action by
$$
\begin{array}{ cccc}
\circ: & R  \times \Gamma  &\longrightarrow &  \Gamma   \\
                       &       (f , g) & \to  &  f  \circ g = f (\partial_{y_1}, \dots, \partial_{y_n})(g)
\end{array}
$$

\noindent
where $  \partial_{y_i} $ denotes the partial derivative with respect to $y_i.$ This action  can be defined    bilinearly  from that for monomials.
If  we  denote by $ x^{\alpha}= x_1^{\alpha_1} \cdots x_n^{\alpha_n} $ then
$$
x^{\alpha} \circ y^{\beta} =
\left\{
\begin{array}{ll}
\frac{\beta !}{(\beta-\alpha)!}\; y^{ \beta-  \alpha}   & \text{ if }  \beta_i \ge \alpha_i \text{ for } i=1,\dots, n\\ \\
0 & \text{ otherwise}
\end{array}
\right.
$$
$\frac{\beta !}{(\beta-\alpha)!}=\prod_{i=1}^n  \frac{\beta_i !}{(\beta_i-\alpha_i)!}$.
Notice that the apolarity action induces a non-singular $\res$-bilinear pairing:
$$
\begin{array}{ cccc}
\circ: & R_j  \times \Gamma_j   &\longrightarrow &  \res
\end{array}$$
for every $j \ge 0.$

Recall that the injective hull $E_R(\res) $ of $\res$ as an $R$-module is isomorphic as an $R$-module to the divided power ring $\Gamma$ (see \cite{Gab59}, \cite{Nor72b}).
For detailed information   see also \cite{EisGTM}, \cite{Ems78}, \cite[Appendix A]{IK99}.
If $I\subset R$ is an ideal of $R, $ then  $(R/I)^\vee={\rm Hom}_R(R/I,E_R(\res))$ is the $R$-submodule of $\Gamma$
$$
{I^{\perp}} =\{ F \in \Gamma \ |\  I \circ F = 0 \  \}.
$$
This submodule of $\Gamma $  is called {\it{Macaulay's inverse system of $I$.}}
Given an $R$-submodule $W$ of $\Gamma, $ then the dual $W^\vee={\rm Hom}_R(W,E_R(\res))$ is the ring $R/\ann_R(W)$ where
$$ \ann_R(W)= \{ g \in R \ \mid \ g \circ F= 0 \ \mbox{ for \ all \ } F \in W\}
$$
 is an  ideal of $R$.

If $I$ is a homogeneous ideal of $R$ (resp. $W$ is generated by homogeneous polynomials of $\Gamma$),  then $I^{\perp}$ is generated by homogeneous polynomials of $\Gamma$ (resp. $\ann_R(W) $ is an homogeneous ideal of $R$)  and
$ I^{\perp} = \oplus I_j ^{\perp} $ where $I_j ^{\perp} = \{ F \in \Gamma_j \mid  \ g \circ F=0 \ \ \mbox{ for \ all \ }  g \in I_j \}$, i.e. $(I^{\perp})_j=I_j ^{\perp}$, see \cite{Ger96}, Proposition 2.5.
Notice that, in particular, the Hilbert function of $R/I$ can be computed by $I^{\perp}, $ see for instance \cite[Section 2]{ER17}.

 Macaulay in \cite[IV]{Mac16} proved a particular case of Matlis duality, called Macaulay's correspondence,
between the
  ideals $I\subseteq R  $ such that $R/I$ is an Artinian local ring and $R$-submodules  $W=I^{\perp}$ of $\Gamma $ which are finitely generated.
Macaulay's  correspondence is an effective method for computing Gorenstein Artinian rings, see  \cite{CI12}, Section 1, \cite{Iar94}, \cite{Ger96} and \cite{IK99}. Macaulay proved that   Artinian  Gorenstein $\res$-algebras $A=R/I$ of socle degree $s$ correspond to cyclic $R$-submodules  of $\Gamma$ generated by a polynomial $F\neq 0$ of degree $s$.
  See \cite{Ems78}, \cite{EI78}, \cite{IK99} for more details on inverse systems.

\medskip
 The authors extended  Macaulay's correspondence characterizing the $d$-dimensional local Gorenstein $\res$-algebras in terms of suitable submodules of $\Gamma$, see \cite{ER17}. This result in the one-dimensional case will be our starting point and for completeness we include here the statement.

In the following given a family $\mathcal F=\{F_j ; j\in J\}$ of elements of $\Gamma$ we denote by
$\langle F_j ; j\in J \rangle$ (resp.  $\langle F_j ; j\in J \rangle_{\res}$) the $R$-submodule (resp. $\res$-vector subspace) of $\Gamma$ generated by $\mathcal F.$

\medskip
\begin{definition}
\label{G-modules}  An   $R$-submodule $M $ of  $ \Gamma  $    is called $G$-admissible if it admits a countable system of  generators $\{F_l\}_{ l\in \mathbb N_+} $ satisfying  the following conditions
\begin{enumerate}
\item[(1)] There exists a linear form   $z \in R$ such that for all $l\in \mathbb N_+$
$$
z \circ F_l =
\left\{
\begin{array}{ll}
 F_{l-1}   & \text{ if }  l> 1 \ \\
0 & \text{ otherwise.}
\end{array}
\right.
$$
\item[(2)] $\ann_R(\langle F_l\rangle)\circ F_{l+1}=\langle F_{1}\rangle$ for all $l\in \mathbb N_+$.
\end{enumerate}

\noindent
If this is the case, we say that $M = \langle F_l, l\in \mathbb N_+\rangle$ is a
$G$-admissible $R$-submodule of $\Gamma$ with respect to the linear form   $z \in R$.
\end{definition}


We present the main result of \cite{ER17} in the one-dimensional case of interest in this paper.

\medskip
\begin{theorem}
\label{bijecGor}
There is a one-to-one correspondence $\mathcal C$ between the following sets:

\noindent
(i)
 one-dimensional  Gorenstein $\res$-algebras $A=R/I$,

\noindent
(ii) non-zero $G$-admissible $R$-submodules $M=\langle F_l, l\in \mathbb N_+\rangle$ of $\Gamma$.

\noindent
In particular, given an ideal $I\subset R$ with $A= R/I$ satisfying $(i)$ and $z$ a linear regular element  modulo $I, $   then
 $$\mathcal C(A)= I^{\perp} = \langle F_l,  l\in \mathbb N_+\rangle \subset \Gamma   \ \ {\text{with}}\ \
  \langle F_l\rangle=(I+(z^l))^\perp $$ is $G$-admissible.
 Conversely, given an  $R$-submodule $M$ of $\Gamma $ satisfying  (ii), then $$\mathcal C^{-1}(M)=R/I  \ \ {\text{with}}\ \
 I= \ann_R(M) = \bigcap_{l\in \mathbb N_+}\ann_R(\langle F_l\rangle).$$
\end{theorem}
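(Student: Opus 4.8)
The plan is to prove the two implications of the correspondence separately and then to verify that the assignments $A\mapsto I^{\perp}$ and $M\mapsto R/\ann_{R}(M)$ are mutually inverse.

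\emph{From (i) to (ii).} Let $A=R/I$ be one-dimensional Gorenstein and choose a linear form $z$ that is a nonzerodivisor modulo $I$; such a $z$ exists by prime avoidance, since $\res$ is infinite and $\m$ is not an associated prime of $A$. For each $l\in\mathbb{N}_{+}$ the power $z^{l}$ is again a nonzerodivisor on $A$, so $A/z^{l}A=R/(I+(z^{l}))$ is an Artinian Gorenstein algebra, being the quotient of a Gorenstein ring by a regular element. By the classical Macaulay correspondence recalled above, $N_{l}:=(I+(z^{l}))^{\perp}$ is therefore a \emph{cyclic} submodule of $\Gamma$; the inclusion $(z^{l+1})\subseteq(z^{l})$ gives $N_{l}\subseteq N_{l+1}$, and since every polynomial of $\Gamma$ is killed by a large power of $z$ one has $I^{\perp}=\bigcup_{l}N_{l}$. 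Because multiplication by $z$ is injective on $A$, its Matlis dual $F\mapsto z\circ F$ is surjective on $I^{\perp}$, and restricting to the submodules of $z^{l}$-torsion shows that $z\circ(-)\colon N_{l+1}\to N_{l}$ is onto; hence $z\circ\langle G_{l+1}\rangle=\langle z\circ G_{l+1}\rangle=N_{l}$ for any generator $G_{l+1}$ of $N_{l+1}$, so $z\circ G_{l+1}$ is a generator of $N_{l}$. Rescaling the generators inductively, we obtain generators $F_{l}$ of $N_{l}$ with $z\circ F_{1}=0$ and $z\circ F_{l}=F_{l-1}$ for $l>1$, which is condition (1). Condition (2) then follows from $\ann_{R}(\langle F_{l}\rangle)=I+(z^{l})$ (Macaulay again), from $I\circ F_{l+1}=0$, and from the iterate $z^{l}\circ F_{l+1}=F_{1}$ of (1), since $\ann_{R}(\langle F_{l}\rangle)\circ F_{l+1}=(I+(z^{l}))\circ F_{l+1}=R\circ(z^{l}\circ F_{l+1})=\langle F_{1}\rangle$. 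Thus $I^{\perp}=\langle F_{l};\,l\in\mathbb{N}_{+}\rangle$ is $G$-admissible with respect to $z$, with $\langle F_{l}\rangle=(I+(z^{l}))^{\perp}$, and $\bigcap_{l}\ann_{R}(\langle F_{l}\rangle)=\bigcap_{l}(I+(z^{l}))=I$ by Krull's intersection theorem; so $\mathcal{C}(A)=I^{\perp}$ as claimed.

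\emph{From (ii) to (i).} Let $M=\langle F_{l};\,l\in\mathbb{N}_{+}\rangle$ be non-zero and $G$-admissible with respect to $z$; put $J_{l}:=\ann_{R}(\langle F_{l}\rangle)$ and $I:=\ann_{R}(M)=\bigcap_{l}J_{l}$, so that $M\subseteq I^{\perp}$ and $R/J_{l}\cong\langle F_{l}\rangle$ is Artinian. First, condition (1) forces $z$ to be a nonzerodivisor on $A:=R/I$: if $za\in I$, then $a\circ F_{l-1}=a\circ(z\circ F_{l})=(za)\circ F_{l}=0$ for all $l\geq 2$, hence $a\in\bigcap_{m}J_{m}=I$. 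Next, condition (2), rewritten by means of $z^{l}\circ F_{l+1}=F_{1}$ as $J_{l}\circ F_{l+1}=(z^{l})\circ F_{l+1}$, forces $J_{l}\subseteq J_{l+1}+(z^{l})$; together with $z^{l}\in J_{l}$ and $J_{l+1}\subseteq J_{l}$ this yields the recursion $J_{l}=J_{l+1}+(z^{l})$, and hence $J_{l}=J_{l+k}+(z^{l})$ for every $k\geq 0$. Granting the equality $J_{l}=I+(z^{l})$ for all $l$ (discussed below), we conclude as follows: $R/(I+(z))=R/J_{1}$ is Artinian Gorenstein because its inverse system $\langle F_{1}\rangle$ is cyclic; since in addition $z$ is a nonzerodivisor on $A$, the ring $A$ is one-dimensional Cohen--Macaulay, and as $A/zA$ is Gorenstein so is $A$. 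Moreover $I^{\perp}=\bigcup_{l}(I+(z^{l}))^{\perp}=\bigcup_{l}J_{l}^{\perp}=\bigcup_{l}\langle F_{l}\rangle=M$. The two assignments are now seen to be mutually inverse: $\mathcal{C}^{-1}(\mathcal{C}(A))=R/\ann_{R}(I^{\perp})=R/\ann_{R}((R/I)^{\vee})=R/I$ by Matlis reflexivity of the finitely generated module $R/I$, while $\mathcal{C}(\mathcal{C}^{-1}(M))=I^{\perp}=M$ was just shown.

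\emph{The main obstacle.} The only non-formal step is the equality $J_{l}=I+(z^{l})$ in the converse direction: the recursion supplies only $J_{l}=J_{l+k}+(z^{l})$ for each finite $k$, and one must pass to the limit. I would argue by successive approximation: starting from $g\in J_{l}$, use the recursions $J_{l+k}=J_{l+k+1}+(z^{l+k})$ to correct $g$ repeatedly by elements of $(z^{l})$; the successive corrections to the cofactor of $z^{l}$ lie in higher and higher powers of $\m$, so they converge in the complete local ring $R$ to a cofactor $h$ with $g-z^{l}h\in\bigcap_{k}(J_{l+k}+\m^{l+k})=I$, whence $g\in I+(z^{l})$. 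In the graded setting this argument collapses to the observation that the graded pieces $(J_{l+k})_{d}$ decrease to $I_{d}$ and hence are eventually constant. The other point to handle with care is the inductive rescaling of the $F_{l}$ in direction (i), which relies on $z\circ(-)$ being surjective on each Artinian section $N_{l}$ — itself a consequence, via Matlis duality, of the injectivity of multiplication by $z$ on $A$.
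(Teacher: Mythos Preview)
The paper does not give its own proof of this statement: it is quoted without proof from \cite{ER17} as background for the rest of the article. Your argument is correct and follows the expected line. Two small remarks. First, what you call ``rescaling'' in direction (i) is more accurately an inductive \emph{lifting}: one fixes a generator $F_{1}$ of $N_{1}$ and, using the surjectivity of $z\circ(-)\colon N_{l+1}\to N_{l}$, chooses $F_{l+1}\in N_{l+1}$ with $z\circ F_{l+1}=F_{l}$; the check that this lift again generates $N_{l+1}$ (since $z^{l}\circ F_{l+1}=F_{1}$ forces $N_{1}=\langle F_{1}\rangle\subseteq\langle F_{l+1}\rangle$, and a length count along the exact sequence $0\to N_{1}\to N_{l+1}\to N_{l}\to 0$ then gives equality) is the real content of the step, not a scalar adjustment. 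Second, your identification and handling of the limiting step $J_{l}=I+(z^{l})$ is on point: in the graded setting used in this paper it is exactly the stabilization of $(J_{l+k})_{d}$ in each fixed degree, while in the local setting of \cite{ER17} one works over a complete regular local ring and your convergence argument applies verbatim. With these clarifications the proof is complete.
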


In the next result we prove that the condition $(1)$ of the Definition \ref{G-modules}  suffices for a system of generators of a Gorenstein ring to be $G$-admissible.
This result holds for arbitrary dimension, we give here the proof in the one-dimensional case.

\medskip
\begin{proposition}
\label{1-suffices}
Let $A= R/I$ be a one-dimensional Gorenstein ring with inverse system
$I^\perp$  containing a set $\mathcal F=\{F_l\}_{ l\in \mathbb N_+} $.
Let $z$ be a non-zero divisor of $A$.
If $\mathcal F$ satisfies the following conditions:
\begin{itemize}
    \item [(1)] for all $l\in \mathbb N_+$
$$
z \circ F_l =
\left\{
\begin{array}{ll}
 F_{l-1}   & \text{ if }  l> 1 \ \\
0 & \text{ otherwise,}
\end{array}
\right.
$$
\item [(2)] $\deg(F_{1})$ agrees with the socle degree of $A. $
\end{itemize}
then there exist non-zero elements
$\alpha_l\in \res^*$, $ l\in \mathbb N_+$, such that $\mathcal F'=\{\frac{1}{\alpha_l} F_l;  l\in \mathbb N_+\}$ is $G$-admissible with respect to $z$; in particular $\mathcal F$ ( and $\mathcal F'$) is a system of generators of $I^{\perp}$.
\end{proposition}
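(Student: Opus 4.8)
The plan is to check directly that $\mathcal F$ itself satisfies the two axioms of \defref{G-modules} with respect to $z$ and generates $I^\perp$; in fact the conclusion will hold with $\alpha_l\equiv 1$ (if one wants the $\alpha_l$ literally, they are the units comparing $F_l$ with the canonical generators of $(I+(z^l))^\perp$ furnished by \thmref{bijecGor}, and axiom (1) forces them to coincide). Axiom (1) of \defref{G-modules} is exactly hypothesis (1), so everything reduces to the identification
$$\langle F_l\rangle=(I+(z^l))^\perp\qquad\text{for every }l\in\mathbb N_+. $$
First I would record the easy consequences of hypothesis (1): iterating it gives $z^{j}\circ F_l=F_{l-j}$ for $0\le j<l$ and $z^{l}\circ F_l=0$, whence $(I+(z^l))\circ F_l=0$ and so $\langle F_l\rangle\subseteq(I+(z^l))^\perp$; moreover $F_1=z^{l-1}\circ F_l\in\langle F_l\rangle$, so $\langle F_1\rangle\subseteq\langle F_l\rangle$, and $z\circ(r\circ F_1)=r\circ(z\circ F_1)=0$ shows $\langle F_1\rangle\subseteq(zR)^\perp=\{G\in\Gamma:z\circ G=0\}$.

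Next I would prove the identification by induction on $l$. For $l=1$: since $z$ is a non-zero divisor of the one-dimensional Gorenstein ring $A=R/I$, the quotient $A/zA$ is Artinian Gorenstein, so by Macaulay's correspondence $(I+(z))^\perp$ is a cyclic $R$-submodule of $\Gamma$ generated by a form of degree equal to the socle degree $s$ of $A$; since $\deg F_1=s$ by hypothesis (2) and $F_1$ is a non-zero element of $(I+(z))^\perp$ lying in its top degree, $F_1$ generates it, that is $\langle F_1\rangle=(I+(z))^\perp$. For the step $l-1\to l$ I would use the exact sequence of finite-dimensional $\res$-vector spaces
$$0\longrightarrow\langle F_l\rangle\cap(zR)^\perp\longrightarrow\langle F_l\rangle\stackrel{z\,\circ}{\longrightarrow}\langle F_{l-1}\rangle\longrightarrow 0, $$
the surjectivity on the right holding because $z\circ\langle F_l\rangle=\langle z\circ F_l\rangle=\langle F_{l-1}\rangle$ (commutativity of $R$). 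The kernel is precisely $\langle F_1\rangle$: the inclusion $\langle F_1\rangle\subseteq\langle F_l\rangle\cap(zR)^\perp$ is a preliminary remark, while $\langle F_l\rangle\cap(zR)^\perp\subseteq(I+(z^l))^\perp\cap(zR)^\perp=(I+(z^l)+(z))^\perp=(I+(z))^\perp=\langle F_1\rangle$, using $J_1^\perp\cap J_2^\perp=(J_1+J_2)^\perp$ and the case $l=1$. Hence $\dim_\res\langle F_l\rangle=\dim_\res\langle F_1\rangle+\dim_\res\langle F_{l-1}\rangle=l\,\dim_\res\langle F_1\rangle$ by induction. On the other hand, $z$ being a non-zero divisor makes all successive quotients of $A\supseteq zA\supseteq\cdots\supseteq z^lA$ isomorphic to $A/zA$, so $\dim_\res(I+(z^l))^\perp=\length(A/z^lA)=l\,\length(A/zA)=l\,\dim_\res\langle F_1\rangle$; comparing with the inclusion $\langle F_l\rangle\subseteq(I+(z^l))^\perp$ forces equality.

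Granting the identification, I would finish quickly. As $R/(I+(z^l))$ is Artinian, Matlis duality gives $\ann_R\langle F_l\rangle=\ann_R\big((I+(z^l))^\perp\big)=I+(z^l)$, and since $I\circ F_{l+1}=0$ and $z^l\circ F_{l+1}=F_1$ this yields $\ann_R\langle F_l\rangle\circ F_{l+1}=(z^lR)\circ F_{l+1}=R\circ F_1=\langle F_1\rangle$, which is axiom (2). For the generation: $\bigcup_l\langle F_l\rangle\subseteq I^\perp$ since $I\subseteq I+(z^l)$, and conversely, if $F\in I^\perp$ is homogeneous of degree $d$ then $z^{d+1}\circ F=0$ because $z^{d+1}$ has degree larger than $\deg F$, so $(I+(z^{d+1}))\circ F=0$ and $F\in(I+(z^{d+1}))^\perp=\langle F_{d+1}\rangle$; as $I^\perp$ is spanned by homogeneous elements this gives $I^\perp\subseteq\bigcup_l\langle F_l\rangle$. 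Thus $\mathcal F$ generates $I^\perp$ and is $G$-admissible with respect to $z$. The step I expect to be the real obstacle is the kernel computation $\langle F_l\rangle\cap(zR)^\perp=\langle F_1\rangle$ in the induction: the inclusion $\supseteq$ is exactly where hypothesis (2) is used, through $\langle F_1\rangle=(I+(z))^\perp$, and the reverse inclusion together with the two length computations is where the regularity of $z$ enters; dropping either hypothesis leaves $\langle F_l\rangle$ a proper submodule of $(I+(z^l))^\perp$ and the argument collapses.
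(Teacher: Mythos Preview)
Your proof is correct and takes a genuinely different route from the paper's. The paper invokes \thmref{bijecGor} to produce a $G$-admissible system $\{H_l\}$ for $I^\perp$, observes that $F_l\in\langle H_l\rangle$ with $\deg F_l=\deg H_l$, and concludes $F_l=\alpha_l H_l$ for some $\alpha_l\in\res^*$; the $\alpha_l$ are thus the comparison scalars you allude to in your parenthetical. Your argument is more self-contained: rather than importing the existence of $\{H_l\}$, you establish $\langle F_l\rangle=(I+(z^l))^\perp$ directly by the short exact sequence and the length count $\operatorname{Length}(A/z^lA)=l\cdot\operatorname{Length}(A/zA)$, then verify both axioms of \defref{G-modules} by hand. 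This yields the slightly sharper conclusion $\alpha_l\equiv 1$ and makes transparent exactly where each hypothesis enters (as you note in your final paragraph). The paper's argument is shorter but leans on the machinery of \thmref{bijecGor}; yours is an independent verification that could in principle replace part of that theorem's proof.

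Two small remarks. In the base case, your phrase ``lying in its top degree'' implicitly uses that any element of a cyclic module $\langle H\rangle$ whose degree equals $\deg H$ must generate; this is because such an element is $r\circ H$ with $r(0)\neq 0$, and $r$ is then a unit modulo the $\m$-primary ideal $\ann(H)$. This is standard, but worth saying since $F_1$ need not be homogeneous. In the generation step, the appeal to ``$I^\perp$ is spanned by homogeneous elements'' is unnecessary: for any $F\in\Gamma$ of degree $d$ one has $z^{d+1}\circ F=0$ by degree reasons, so the argument already covers the non-graded case without change.
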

\begin{proof}
Let $\{H_l\}_{ l\in \mathbb N_+} $ be a $G$-admissible system of generators of $\mathcal C(A)$ with respect to $z$ in Theorem \ref{bijecGor} above.
Since $\deg(H_{1})$ agrees with the socle degree of $A$, from $(2)$ we get $\deg(F_{1})=\deg(H_{1})$.
Furthermore, again by  $(1)$ we deduce that
$\deg(F_l)=\deg(H_l)$ for all $l\in \mathbb N_+$.
On the other hand, since $\ann(H_l)=I+(z^l)\subset \ann{F_l}$ we deduce $F_l\in \langle H_l \rangle$.
From $\deg(F_l)=\deg(H_l)$ we deduce that
there is $\alpha_l \in \res$ such that $ F_l=\alpha_l H_l$.
If $\alpha_l=0$ then by $(1)$ we get $F_1=0$, this is not possible because $\deg(F_1)$ is the socle degree of $A$. Hence $H_l=\frac{1}{\alpha_l} F_l$ and  from this we get the claim.
\end{proof}

\bigskip
\section{The inverse system of Gorenstein points}

 Given a set of distinct points  $X=\{P_1,\dots, P_r\}$ in $ \mathbb P^n_{\res} $ not contained in a hyperplane,  consider  the corresponding defining ideal $I(X)\subseteq R= \res [x_0, \dots, x_n]. $
Recall that $R/I(X)$ is a Cohen-Macaulay one-dimensional graded ring and $I(X)^{\perp} $  as   graded $R$-submodule of $\Gamma=\res [y_0, \dots, y_n]$   is  not finitely generated. We have  $(I(X)^{\perp})_j= (I(X)_j)^{\perp} $ for every $j \ge 0 $ and   $$\HF_{X}(j)= \dim_{\res}(R_j/I(X)_j)= \dim_{\res} (I(X)^{\perp})_j,  $$ see \cite[Proposition 2.5]{Ger96} where we denote by $\HF_X$ the Hilbert Function of $R/I(X).$  Let $z$   be a linear form in $R$ such that $z(P_i) \neq 0$ for every $i=1,\dots, r,$ that is $z$ is a not zero divisor in $R/I(X).$ Then we will say that $R/I+(z) $ is {\it{an Artinian reduction of $X  $.}}  With this,  $h_t= \Delta \HF_X(t)= \HF_X(t) -\HF_X(t-1) $ is the Hilbert Function in degree $t$ of any Artinian reduction of $X. $   In particular $h_t =0 $ for every $t> s $ where $s $ is called {\it{the socle degree}} of $R/I(X). $ We recall that $s$ equals  also the Castelnuovo-Mumford regularity of $R/I(X).$   The vector $(h_0, \dots, h_s) $ is {\it{the $h$-vector of $R/I(X) $}} or, for short, {\it{the $h$-vector of $X.$}} Recall that $\sum_{i=0}^s h_i=r $ the number of points or equivalently the degree of $R/I(X).$

Recall that $R/I(X)$ is Gorenstein iff it is Cohen-Macaulay and the last (free) nonzero module in its minimal graded resolution is of rank $1$.
 It is known that if $X $ is arithmetically Gorenstein (that is $R/I(X)$ is a Gorenstein ring), then   the $h$-vector of $R/I(X) $ is symmetric, but the symmetry of the $h$-vector is only a necessary condition for being Gorenstein.

\medskip
\begin{example}
\label{DGO-example}
Let us consider   $X=\{P_1, \dots, P_5\} $ where   $P_1=[1,0,0,1]$, $P_2=[0,1,0,1]$, $P_3=[0,0,1,1]$, $P_4=[0,0,0,1]$, $P_5=[-1,0,0,1]$ in $ \mathbb P^3_{\res}$.
The points do not lie on $x_3=0$ and by an easy computation we see that the socle degree of $R/I(X)$ is two.  Then the scheme $X$ is not arithmetically Gorenstein, even if the  h-vector $(1,3,1)$ is symmetric.
 \end{example}

We recall here an  important result due to  Davis, Geramita and Orecchia (\cite{DGO85})   characterizing  Gorenstein points in terms of the Hilbert function, via a Cayley-Bacharach condition.
\vskip 3mm

\begin{theorem} \label{DGO}
Let $X=\{P_1, \dots,P_r\}$ be a set of $r$ distinct points   of $\mathbb P^n_{\res}$  and let $s$ be the socle degree of $R/I(X)$.
Then $X$ is arithmetically Gorenstein if and only if:
\begin{enumerate}
    \item[(1)]
    $\Delta \HF_X (t)=\Delta \HF_X(s-t)    $
    for all $t\ge 0$,
    \item[(2)] $\HF_X(t)=\HF_Y(t)$ for all $t=0,\dots,s-1$ and for all subset $Y$ of $r-1$ points of $X$.
\end{enumerate}
\end{theorem}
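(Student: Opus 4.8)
The plan is to pass to an Artinian reduction and then to recognise the Cayley--Bacharach hypothesis $(2)$ as the assertion that the socle of that reduction is concentrated in top degree. First, fix a linear form $z\in R$ with $z(P_i)\neq 0$ for all $i$; then $z$ is a nonzerodivisor on $A:=R/I(X)$, the Artinian reduction $\bar A:=R/(I(X)+(z))$ has $h$-vector $(h_0,\dots,h_s)$ with $h_t=\Delta\HF_X(t)$, and since $z$ is a linear nonzerodivisor one has $\reg(\bar A)=\reg(A)=s$, so $s$ is also the socle degree of $\bar A$. Because $z$ is a nonzerodivisor, $A$ is Gorenstein if and only if $\bar A$ is Gorenstein, i.e. if and only if $\dim_{\res}\soc(\bar A)=1$; since $\bar A_s\subseteq\soc(\bar A)$ and $h_s\geq 1$, this is equivalent to the conjunction ``$h_s=1$ and $\soc(\bar A)_t=0$ for all $t<s$''. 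By Macaulay's structure theorem for Artinian Gorenstein algebras (recalled in Section~2), a Gorenstein $\bar A$ has symmetric $h$-vector, so condition $(1)$ is necessary; and conversely $(1)$ already forces $h_s=h_0=1$. Hence the whole statement reduces, modulo these standard facts, to the equivalence
$$\soc(\bar A)_t=0 \ \text{ for all }\ t<s \quad\Longleftrightarrow\quad \text{condition }(2).$$

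Next I would rewrite $(2)$ as a Cayley--Bacharach property. For $P_i\in X$ put $X'=X\setminus\{P_i\}$ and let $\mathfrak{p}_i$ be the homogeneous prime of $P_i$; since the points are distinct, $I(X)=I(X')\cap\mathfrak{p}_i$, so
$$0\longrightarrow R/I(X)\longrightarrow R/I(X')\oplus R/\mathfrak{p}_i\longrightarrow R/(I(X')+\mathfrak{p}_i)\longrightarrow 0$$
is exact. Taking Hilbert functions and using $\HF_{\mathfrak{p}_i}\equiv 1$ yields $\HF_X(t)=\HF_{X'}(t)+1-\epsilon_i(t)$ with $\epsilon_i(t)=\dim_{\res}(R/(I(X')+\mathfrak{p}_i))_t\in\{0,1\}$, and $\epsilon_i(t)=1$ exactly when every form of degree $t$ vanishing on $X'$ also vanishes at $P_i$, i.e. when $P_i$ has no separator of degree $t$. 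Thus $(2)$ holds if and only if for every $i$ and every $t\leq s-1$ the point $P_i$ has no separator of degree $t$, that is, $X$ has the Cayley--Bacharach property $\mathrm{CB}(s-1)$.

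One direction of the remaining equivalence is an explicit computation, and it yields the implication $(\Rightarrow)$. Suppose $\mathrm{CB}(s-1)$ fails; pick $P_i$ and a separator $F\in R_\delta$ of $P_i$ of minimal degree $\delta\leq s-1$ (so $F(\hat P_k)=0$ for $k\neq i$, $F(\hat P_i)\neq 0$). Minimality forces $F\notin I(X)+(z)$: if $F=p+zh$ with $p\in I(X)$, then evaluating at the points shows $h$ is a separator of $P_i$ of degree $\delta-1$. Moreover, for each $j$ the homogeneous polynomial $x_jF-\frac{(\hat P_i)_j}{z(\hat P_i)}\,zF$ vanishes at every point of $X$ and hence lies in $I(X)$; therefore $x_jF\in I(X)+(z)$ for all $j$, so $\bar F$ is a nonzero element of $\soc(\bar A)_\delta$ with $\delta<s$. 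This proves ``$\soc(\bar A)_{<s}=0\Rightarrow\mathrm{CB}(s-1)$''; combined with the first paragraph it gives $(\Rightarrow)$, since if $X$ is arithmetically Gorenstein then $\bar A$ is Gorenstein, so $(1)$ holds by Macaulay and $\soc(\bar A)_{<s}=0$, whence $\mathrm{CB}(s-1)$ and thus $(2)$.

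The reverse implication --- a nonzero $\bar g\in\soc(\bar A)_t$ with $t<s$ must force $\mathrm{CB}(s-1)$ to fail --- is where the real content lies and is the step I expect to be the main obstacle, the difficulty being that multiplying a socle element by a linear form only ever produces $0$, so $\bar g$ cannot be ``corrected'' into a genuine separator by naive manipulations. I would treat it dually: writing $W:=(I(X)+(z))^\perp\subset\Gamma$, Macaulay duality gives $\dim_{\res}\soc(\bar A)_t=\mu_t(W)$, the number of minimal generators of the finite-dimensional module $W$ sitting in degree $t$, so the claim becomes ``$W$ is generated by its top-degree component $W_s$ $\Longleftrightarrow$ $\mathrm{CB}(s-1)$''. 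Using that $I(X)^\perp$ is spanned by the divided powers $L_i^{[e]}$ of the linear forms $L_i\in\Gamma_1$ dual to the points, one has, for $e\geq 1$, $W_e=\{\sum_i c_iL_i^{[e]} :\ \sum_i c_i\,z(\hat P_i)L_i^{[e-1]}=0\}$, and ``$W_e=R_1\circ W_{e+1}$ for all $e<s$'' turns into a concrete linear-algebra statement about these spaces and the maps $x_k\circ$ connecting them; alternatively it follows from the classical description of the canonical module $\omega_A\cong(H^1_{\mathfrak{m}}(A))^\vee$ of a reduced zero-dimensional scheme, whose minimal generators are governed by the points and their minimal separator degrees $\delta_i$ and which is generated in a single degree precisely when all $\delta_i=s$. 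Granting this, $(\Leftarrow)$ follows: $(1)$ gives $h_s=1$ and $(2)$ gives $\mathrm{CB}(s-1)$, so $W=RF$ is cyclic with $\deg F=s$, hence $\bar A$ --- and therefore $A=R/I(X)$ --- is Gorenstein.
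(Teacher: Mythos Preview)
The paper does not supply a proof of this theorem: it is quoted as a known result of Davis, Geramita and Orecchia \cite{DGO85} and used only as background, so there is no argument in the paper to compare your proposal against.

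Assessing your proposal on its own, the overall architecture is the standard one and the forward direction is complete. Passing to the Artinian reduction $\bar A=R/(I(X)+(z))$, Gorensteinness of $\bar A$ gives the symmetry $(1)$, and your separator computation is correct: if $F\in R_\delta$ with $\delta\le s-1$ separates $P_i$, then minimality of $\delta$ forces $\bar F\neq 0$ in $\bar A$, and the identity $x_jF-\dfrac{(P_i)_j}{z(P_i)}\,zF\in I(X)$ shows $\bar F\in\soc(\bar A)_\delta$, so $(2)$ follows.

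The converse, however, is left open, and you explicitly flag it as such. The implication ``$\mathrm{CB}(s-1)\Rightarrow\soc(\bar A)_{<s}=0$'' is exactly the substantive half of the Davis--Geramita--Orecchia theorem, and neither of your two sketches actually establishes it. The inverse-system reformulation $W_e=\{\sum_i c_iL_i^{[e]}:\sum_i c_i\,z(P_i)L_i^{[e-1]}=0\}$ is correct, but you do not show that $R_1\circ W_{e+1}=W_e$ for $e<s$ follows from $\mathrm{CB}(s-1)$; and the appeal to ``the classical description of the canonical module \ldots\ whose minimal generators are governed by \ldots\ the minimal separator degrees $\delta_i$'' is effectively invoking the dual form of the very statement you are trying to prove. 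A genuine proof of this direction must use the symmetry hypothesis $(1)$ as well --- $\mathrm{CB}(s-1)$ alone does not force the socle to sit in a single degree --- typically via a pairing or counting argument that matches $\bar A_t$ with $\bar A_{s-t}$ and shows a low-degree socle element would violate it. As written, the $(\Leftarrow)$ direction is a plan rather than a proof.
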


In this paper we are mainly interested in the construction of Gorenstein points, rather than  verifying that a given a set of points is  Gorenstein. We hope to achieve this aim through an effective  characterization of the inverse system.

Given a point $P=[a_0,\dots , a_n]\in \mathbb P^n_{\res}$ we define the dual linear form $L:=a_0 y_0+\cdots +a_n y_n$ in $\Gamma $ and we say that $L $ {\em{is the linear form associated to}} $P.$

\medskip
We present   a well known  result  concerning the inverse system of the ideal of any  set of points, see  \cite[Theorems IIA and IIB]{EI95},   \cite[Theorem 3.2]{Ger96}.

\medskip
\begin{proposition}
\label{fat-Geramita}
Let $X=\{P_1,\dots, P_r\}$ be a set of distinct points of $\mathbb P^n_{\res}$ and let $L_1, \dots, L_r$ be the corresponding associated linear forms.
Then for all $j\ge 0$
$$
(I(X)^\perp)_j=
\langle L_1^{j},\dots ,L_r^{j}\rangle_{\res} .
$$
\end{proposition}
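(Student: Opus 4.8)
The plan is to exploit the perfect apolarity pairing $\circ\colon R_j\times\Gamma_j\to\res$ recalled above, together with the already noted identity $(I(X)^{\perp})_j=(I(X)_j)^{\perp}$, and to show that the orthogonal complement of $\langle L_1^{j},\dots,L_r^{j}\rangle_{\res}$ inside $R_j$ is exactly $I(X)_j$; double orthogonality then forces the desired equality. The single computational ingredient is the classical \emph{apolarity identity}: for a point $P=(a_0,\dots,a_n)\in\mathbb P^n_{\res}$ with associated linear form $L=a_0y_0+\cdots+a_ny_n\in\Gamma_1$ and any $g\in R_j$, one has $g\circ L^{j}=j!\,g(a_0,\dots,a_n)$, the right-hand side being the value of the form $g$ at a representative of $P$. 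By bilinearity it suffices to verify this for a monomial $g=x^{\alpha}$ with $|\alpha|=j$: expanding $L^{j}$ by the multinomial theorem, $L^{j}=\sum_{|\beta|=j}\binom{j}{\beta}a^{\beta}y^{\beta}$, and applying $\partial_{y_0}^{\alpha_0}\cdots\partial_{y_n}^{\alpha_n}$, only the term $y^{\alpha}$ survives (since $|\alpha|=|\beta|=j$ and $\beta\ge\alpha$ force $\beta=\alpha$), producing $\binom{j}{\alpha}a^{\alpha}\,\alpha!=j!\,a^{\alpha}=j!\,g(a_0,\dots,a_n)$. Since $\operatorname{char}\res=0$ the scalar $j!$ is invertible, so $g\circ L^{j}=0$ if and only if $g(P)=0$; note also that $L^{j}$ is well defined up to a nonzero scalar, so the span $\langle L_1^{j},\dots,L_r^{j}\rangle_{\res}$ does not depend on the chosen representatives.

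From the identity the inclusion $\langle L_1^{j},\dots,L_r^{j}\rangle_{\res}\subseteq (I(X)^{\perp})_j$ is immediate: if $g\in I(X)_j$ then $g(P_i)=0$ for every $i$, whence $g\circ L_i^{j}=j!\,g(P_i)=0$, so each $L_i^{j}$ lies in $(I(X)_j)^{\perp}=(I(X)^{\perp})_j$. For the reverse inclusion, put $W=\langle L_1^{j},\dots,L_r^{j}\rangle_{\res}\subseteq\Gamma_j$ and compute its orthogonal complement in $R_j$ with respect to the pairing $R_j\times\Gamma_j\to\res$:
$$
\{\,g\in R_j \ :\ g\circ L_i^{j}=0,\ i=1,\dots,r\,\}=\{\,g\in R_j\ :\ g(P_i)=0,\ i=1,\dots,r\,\}=I(X)_j ,
$$
where the last equality is just the fact that $I(X)$ is the saturated defining ideal of the reduced scheme $X$, so its degree-$j$ component is precisely the space of degree-$j$ forms vanishing at all the $P_i$. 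Because the pairing is non-singular and $\dim_{\res}R_j=\dim_{\res}\Gamma_j$, passing to orthogonal complements is an involution on subspaces; therefore $W=(W^{\perp})^{\perp}=(I(X)_j)^{\perp}=(I(X)^{\perp})_j$, which is the claim. (Alternatively one may finish by a dimension count, using $\dim_{\res}(I(X)^{\perp})_j=\HF_X(j)$ together with the same orthogonality argument, which also gives $\dim_{\res}W=\HF_X(j)$.)

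Regarding difficulty: there is essentially no obstacle, as the statement is classical and the two cited references give it. The only points demanding care are the apolarity identity itself, where the characteristic-zero hypothesis enters through the factor $j!$ (in positive characteristic one must pass to divided powers $L^{[j]}$), and the remark that $I(X)_j$ coincides with the space of degree-$j$ forms vanishing on $X$, i.e. that saturating the ideal does not alter it in any degree.
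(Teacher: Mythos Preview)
Your proof is correct. The paper does not supply its own proof of this proposition; it merely states the result as well known and refers the reader to \cite[Theorems IIA and IIB]{EI95} and \cite[Theorem 3.2]{Ger96}. Your argument via the apolarity identity $g\circ L^{j}=j!\,g(P)$ together with double orthogonality under the perfect pairing $R_j\times\Gamma_j\to\res$ is precisely the standard proof found in those references.
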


\medskip
   It is clear that for being Gorenstein the number of points has to be compatible with a symmetric $h$-vector. But this is only a necessary condition.  If we consider $n+2$ generic points in $\mathbb P^n_{\res}, $ ($h$-vector $(1,  n,  1) $), then the defining ideal is always Gorenstein, but this is the only case where generic points work. Next case of interest is  a set $X$ of   $2n+2$   points in $\mathbb P^n_{\res} $  with $h$-vector $(1,  n, n,   1) $.  To construct  $2n+2$ Gorenstein points, a first naive idea could be to add a suitable point to a set $Y$ of $2n+1$ generic points.  Unfortunately this idea does not work if $n>3.   $    Let  $P \in  \mathbb P^n_{\res} $ a point and let $X=Y\cup\{P\}  $  where $Y$ consistes of $2n+1$ generic points. It is known that  $\HF_Y(2)=2n+1 $  and  $I(Y) $ is generated by quadrics, as we explain later.  Now since  $I(X) \subsetneq I(Y), $ then necessarily  $\HF_X(2)=2n+2 > \HF_Y(2)=2n+1 $ and hence the  $h$-vector of $X$ is   $(1,  n, n+ 1),  $ not symmetric. In particular $X$ is not aritmetically Gorenstein.
  We explain why $I(Y) $ is generated by quadrics.  This   was expected  by the ``Ideal Generation  Conjecture"  stated by Geramita and Orecchia  \cite{GO82}  and   Lorenzini \cite{Lor93}.
  They proved that there exists a Zariski open set where the conjecture holds true.
  Hence for proving the conjecture  it is enough to prove that this open set is not empty.
  This conjecture is still open in general, but it holds true     for $2n+1$ generic points. This follows by Corollary 5.1. in \cite{GL93}  for all integers $n\ge 7.$ For $n=4,5,6$  it can be verified by a straightforward computation, for instance  by  \cite{Singular}.

\medskip
 Recall that it was noted by Eisenbud and Popescu (\cite{EP00}, Theorem 7.3) that a set $X$ of $2n+2$   points in $\mathbb P^n_{\res}  $  being arithmetically Gorenstein is equivalent to be self-associated, that is any quadric containing all but one of the points of $X$ must also contain the remaining point. Self-associated sets of points arise naturally as hyperplane sections of canonically embedded curves (\cite{DO88}  Ex. 5, p. 47). More
precisely, let $C \subseteq  \mathbb P^{g-1}_{\res}  $  be the canonical embedding of a non-bielliptic curve $C$  of
genus $g$, and let $X=C \cap H$ be any transverse hyperplane section. By the Riemann-
Roch theorem, $X$ fails by 1 to impose independent conditions on quadrics, while any
proper subset of $X$ imposes independent conditions. Thus, $X$  is a self-associated  set of points
in $\mathbb P^{g-1}_{\res}.$
See also \cite{CRV01} Theorem 5.3 for a discussion on the topic.

\medskip
Combining   Proposition \ref{fat-Geramita}  and Theorem \ref{bijecGor},    we aim to give an effective description of the $G$-admissible system of generators of $I(X)^{\perp}$. We start with some technical lemmas.

\medskip
\begin{lemma} \label{Lemma1}
Let $X=\{P_1,\dots, P_r\}$ be a set of distinct points of $\mathbb P^n_{\res}$ of socle degree $s$,  and let $L_1, \dots, L_r$ be the corresponding  associated linear forms.
For all $i=1,\dots,r$, there exists
an homogeneous form  $\sigma$ of degree $s$ such that
\begin{enumerate}
    \item $\sigma \circ L_i^t\neq 0$ for all $t\ge s$, and
    \item $\sigma \circ L_j^t=0$  for all $t\ge 1$ and $j\neq i$.
\end{enumerate}
 \end{lemma}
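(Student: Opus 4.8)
The statement asserts, for a fixed index $i$, the existence of a degree-$s$ form $\sigma$ that ``sees'' the point $P_i$ (via its linear form $L_i$) and ``kills'' all the other points. The natural object to try is the form dual to the hyperplanes through the other points. Concretely, for each $j\neq i$ choose a linear form $\ell_j\in R$ vanishing at $P_j$ but not at $P_i$ (possible since the points are distinct, so $P_i\notin\{P_j\}$ as points of $\mathbb P^n_{\res}$). Set $g:=\prod_{j\neq i}\ell_j\in R_{r-1}$. Then $g$ vanishes at every $P_j$ with $j\neq i$ and $g(P_i)\neq 0$. The first instinct is that $g$ is related to the annihilator condition (2): since $\ell_j\circ L_j^t$ is a multiple of $L_j^{t-1}$ and $g$ contains the factor $\ell_j$, we should get $g\circ L_j^t=0$ once the multiplicity is right. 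The issue is that $g$ has degree $r-1$, not $s$, and $r-1\ge s$ in general. But this is exactly the kind of defect Proposition~\ref{fat-Geramita} and the apolarity pairing let us repair: what we want is a form $\sigma\in R_s$ that behaves like $g$ against the relevant powers of the $L_j$'s.

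The cleaner route is to produce $\sigma$ directly in the inverse system and then dualize. First I would observe that by Proposition~\ref{fat-Geramita}, $(I(X)^\perp)_s=\langle L_1^s,\dots,L_r^s\rangle_{\res}$, and since the points are distinct the forms $L_1^s,\dots,L_r^s$ are linearly independent in $\Gamma_s$ (a standard fact: powers of pairwise non-proportional linear forms are independent, equivalently the $h$-vector forces $\HF_X(s)$ to stay large — in fact $L_1^s,\ldots,L_r^s$ span an $r$-dimensional space because $\HF_X(t)=r$ for $t\ge s$). Now use the non-singular pairing $R_s\times\Gamma_s\to\res$: the subspace $\langle L_j^s : j\neq i\rangle_\res$ has dimension $r-1$, so its orthogonal complement in $R_s$ is one-dimensional; pick a nonzero $\sigma$ in that complement. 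Then $\sigma\circ L_j^s=0$ for all $j\neq i$, while $\sigma\circ L_i^s\neq 0$ — otherwise $\sigma$ would pair to zero with all of $\langle L_1^s,\dots,L_r^s\rangle_\res=(I(X)^\perp)_s$, forcing $\sigma\in I(X)_s$, but $\sigma\circ L_j^s=0$ for every $j$ says exactly $\sigma\in I(X)_s$ is consistent — so I need the independence of the $L_j^s$ to conclude $\sigma\notin\langle L_j^s:j\neq i\rangle^\perp$ implies nonzero pairing with $L_i^s$. This gives condition (1) at $t=s$ and condition (2) at $t=s$.

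The remaining point is to upgrade from the single exponent $t=s$ to all $t\ge s$ in (1) and all $t\ge 1$ in (2). For (2) with $1\le t<s$: if $\sigma\circ L_j^t\neq 0$ for some such $t$, then applying a suitable operator of degree $s-t$ (e.g.\ a power of the linear form dual to $L_j$, or simply noting $x\circ(\sigma\circ L_j^t)$ generates, under the $R$-action, something hitting $L_j^{s}$ up to scalar) we could produce a nonzero element of $\langle\sigma\rangle\circ L_j^s$, contradicting $\sigma\circ L_j^s=0$; more directly, $\sigma\circ L_j^t=0$ for $t\ge s$ combined with the relation $(\text{something})\circ L_j^{t+1}=c\,L_j^{t}$ lets one run a descending induction — but one must be careful that low powers are not automatically killed. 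The honest fix: note that $\langle L_j^t\rangle$ for large $t$ contains $L_j^{t'}$ for all $t'\le t$ (apply powers of a linear form), so $\sigma\circ L_j^s=0$ forces $\sigma\circ L_j^{t}=0$ for every $t\le s$, and trivially for $t>s$ as well since $L_j^{t}$ has degree $>s=\deg\sigma$, making $\sigma\circ L_j^t$ a form of positive degree lying in $\langle L_j^{t-s}\rangle$ — and it must be zero because otherwise $z^{t-s}\circ(\sigma\circ L_j^t)$, for $z$ a non-zerodivisor, would be a nonzero multiple of $\sigma\circ L_j^s$. This last argument, getting (1) for all $t\ge s$ (i.e.\ $\sigma\circ L_i^t\neq 0$), is the main obstacle: one needs that $\sigma\circ L_i^s\neq 0$ propagates upward, which follows because $L_i^{s}=\frac{1}{t!/(t-s)!}\,(\ell_i^{t-s}\circ L_i^t)$ for the linear form $\ell_i$ dual to $L_i$, hence $\sigma\circ L_i^s$ is (a scalar multiple of) $\ell_i^{t-s}\circ(\sigma\circ L_i^t)$, so $\sigma\circ L_i^t$ cannot vanish. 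Assembling these observations yields the lemma.
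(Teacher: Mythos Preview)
Your approach is essentially the same as the paper's, phrased in dual language. The paper takes $Y=X\setminus\{P_i\}$ and uses $\HF_Y(s)<\HF_X(s)=r$ to produce $\sigma\in I(Y)_s\setminus I(X)_s$; your orthogonality construction yields exactly the same $\sigma$, since $\sigma\circ L_j^s=s!\,\sigma(P_j)$, so ``$\sigma\perp L_j^s$ for $j\neq i$ and $\sigma\not\perp L_i^s$'' is precisely ``$\sigma\in I(Y)_s\setminus I(X)_s$''. One slip to fix: the orthogonal of $\langle L_j^s:j\neq i\rangle_{\res}$ in $R_s$ is not one-dimensional---it has dimension $\dim_{\res} R_s-(r-1)$. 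What you actually need (and what you grope toward in the next sentence) is that this orthogonal strictly contains $I(X)_s=\langle L_1^s,\dots,L_r^s\rangle_{\res}^{\perp}$, by independence of the $L_j^s$; pick $\sigma$ in the difference.

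Once you recognise $\sigma\in I(P_j)$ for $j\neq i$, part (2) is immediate for \emph{all} $t\ge 1$ directly from \propref{fat-Geramita} applied to the single point $P_j$ (this is how the paper does it), so your case analysis on $t<s$, $t=s$, $t>s$ is unnecessary---and your passage ``$\langle L_j^t\rangle$ for large $t$ contains $L_j^{t'}$\dots'' is pointing in the wrong direction (for $t<s$ the vanishing is trivial since $\deg\sigma>t$). For part (1) your upward propagation via $L_i^s=\mathrm{const}\cdot\ell^{t-s}\circ L_i^t$ is exactly the paper's argument, though the paper also checks one can descend to $t_0\le s$; in your setup this step is absorbed because you already know $\sigma\circ L_i^s=s!\,\sigma(P_i)\neq 0$.
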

\begin{proof}
We may assume that $i=1$.
We write $Y=\{P_2,\dots, P_r\}$.
Since $Y \subsetneq X$ we have that
$$
\HF_Y(s)< \HF_X(s)=r.
$$
This implies that there exist
$\sigma \in I(Y)_s\setminus I(X)_s$, so
$\sigma(P_1)\neq 0$ and $\sigma(P_j)=0$ for all $j\ge 2$.

From $\sigma \in I(P_j)$ for $j\ge 2$ we deduce that
$\sigma \circ L_j^t=0$  for all $t\ge 1$ and $j\ge 2$ (by \propref{fat-Geramita}).

Since $\sigma \notin I(P_1)$ there exist an integer $t_0$ such that $\sigma \circ L_1^{t_0}\neq 0$.
Assume  $\sigma\circ L_1^{t_0+m}=0$ for some integer $m\ge 0$ and
 let $l \in \{0,...,n\}$ be   such that $x_l\circ L_1=c\neq 0$ (this means that the $l$-th homogeneous coordinate of the point $P_1$ is nonzero), then
$$
[c^m(t_0+m)\cdots (t_0+1)] (\sigma \circ L_1^{t_0})=\sigma\circ (x_l^m\circ L_1^{t_0+m})=
x_l^m\circ(\sigma \circ L_1^{t_0+m})=
x_l^m\circ 0=0,
$$
so $\sigma\circ L_1^{t_0}=0$; contradiction.
Hence $\sigma \circ L_1^t \neq 0$ for all $t\ge t_0$.
Take $t_0$ to be the minimum with the above property. Obviously $t_0\ge s, $ since otherwise $\sigma\circ L_1^{t_0}=0, $ as deg$(\sigma)=s.$
Assume   $t_0> s$. Then,   $\sigma\circ L_1^{t_0} $ is a non-constant homogeneous polynomial in $\Gamma$ and so, at least one of its first partial derivatives is nonzero. So there exists $l \in \{0,...,n\}$ such that $x_l\circ (\sigma \circ L_1^{t_0}) \neq 0. $ Then
$$
0\neq x_l\circ (\sigma \circ L_1^{t_0})=
\sigma\circ(x_l\circ L_1^{t_0})
=\sigma\circ(t_0 a_l L_1^{t_0-1})=
t_0 a_l (\sigma\circ L_1^{t_0-1})
$$
where $L_1=a_0 y_0+\dots+{a_n}y_n$.
Hence $\sigma \circ L_1^{t_0-1}\neq 0, $ a contradiction with the minimality of $t_0. $ So $t_0=s$.
\end{proof}

\medskip
\begin{lemma} \label{Lemma2}
Let $X=\{P_1,\dots, P_r\}$ be a set of distinct points of $\mathbb P^n_{\res}$ of socle degree $s$ and let $L_1, \dots, L_r$ be the associated linear forms.
Given an integer $u\ge 1$ and $\alpha_1,\dots ,\alpha_r\in \res^*$ let us consider a family of polynomials
$F_j=\frac{1}{(j+u)!}\sum_{i=1}^r \alpha_i L_i^{j+u}$, $j\ge 1$.
For all  integers $w\ge 1$, $i=1,\dots, r$
and $t\ge \Max\{s, s+w-u\}$ there exist a   homogeneous form $\beta_i \in R$ of degree $t+u-w$ such that
$$
L_i^w=\beta_i \circ F_t
$$
\end{lemma}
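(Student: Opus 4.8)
The goal is to show that for $t$ sufficiently large (namely $t\ge\Max\{s,s+w-u\}$), the single power $L_i^w$ lies in the $R$-submodule $\langle F_t\rangle$ generated by $F_t$, i.e., it is $\beta\circ F_t$ for some form $\beta$ of the correct degree $t+u-w$. The natural strategy is to \emph{isolate} the $i$-th summand of $F_t$ by applying the form $\sigma$ produced in Lemma~\ref{Lemma1}, and then to \emph{lower the exponent} on $L_i$ down to $w$ by applying a suitable power of a linear form $x_l$ with $x_l\circ L_i\neq 0$.

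First I would fix $i$ and invoke Lemma~\ref{Lemma1} to get a homogeneous form $\sigma$ of degree $s$ with $\sigma\circ L_i^t\neq 0$ for all $t\ge s$ and $\sigma\circ L_j^t=0$ for all $t\ge 1$, $j\neq i$. Then for $t\ge s$,
$$
\sigma\circ F_t=\frac{1}{(t+u)!}\sum_{k=1}^r\alpha_k(\sigma\circ L_k^{t+u})=\frac{\alpha_i}{(t+u)!}\,\sigma\circ L_i^{t+u},
$$
since every $k\neq i$ term dies. Now $\sigma\circ L_i^{t+u}$ is, up to a nonzero scalar, the form $L_i^{t+u-s}$ times a constant — more precisely, repeatedly differentiating $L_i^{m}$ by any degree-one operator $x_l$ with $x_l\circ L_i=a_l\neq 0$ multiplies by $m a_l$, so applying a degree-$s$ form $\sigma$ to $L_i^{t+u}$ yields $c_\sigma\,L_i^{t+u-s}$ for a nonzero constant $c_\sigma$ (nonzero precisely because $\sigma(P_i)\neq 0$, which is what "$\sigma\circ L_i^t\neq 0$" encodes). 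Thus $\sigma\circ F_t=\lambda\,L_i^{t+u-s}$ for some $\lambda\in\res^*$ depending on $t$.

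Next I would drop the exponent from $t+u-s$ down to $w$. Pick an index $l$ with $x_l\circ L_i=a_l\neq 0$; then $x_l^{m}\circ L_i^{N}=a_l^{m}\,\frac{N!}{(N-m)!}\,L_i^{N-m}$ for $0\le m\le N$, which is a nonzero scalar multiple of $L_i^{N-m}$. Taking $N=t+u-s$ and $m=(t+u-s)-w=t+u-s-w$, which is $\ge 0$ exactly because $t\ge s+w-u$, we obtain $x_l^{\,t+u-s-w}\circ(\sigma\circ F_t)=\mu\,L_i^{w}$ for some $\mu\in\res^*$. Hence $\beta:=\mu^{-1}x_l^{\,t+u-s-w}\sigma$ satisfies $\beta\circ F_t=L_i^{w}$, and $\deg\beta=(t+u-s-w)+s=t+u-w$ as required. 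The two inequalities $t\ge s$ and $t\ge s+w-u$ are used exactly once each: the first guarantees $\sigma\circ L_i^{t+u}\neq 0$ (so $\lambda\neq 0$), the second guarantees the differentiation exponent $t+u-s-w$ is non-negative.

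I expect no serious obstacle here; the only point requiring a little care is the bookkeeping of the nonzero constants — verifying that neither $\sigma\circ L_i^{t+u}$ nor the subsequent differentiation by $x_l^{\,t+u-s-w}$ produces zero. Both follow from the characteristic-zero hypothesis (so factorials and the integers $N,N-1,\dots$ are invertible) together with $a_l\neq 0$ and the conclusion of Lemma~\ref{Lemma1}; one can make this fully explicit by the same "apply an operator, commute it past $\sigma$, read off a nonzero factor" computation already used in the proof of Lemma~\ref{Lemma1}.
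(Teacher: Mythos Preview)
Your proof is correct and follows essentially the same route as the paper's: apply the separator $\sigma$ from \lemref{Lemma1} to kill all summands of $F_t$ except the $i$-th, recognize $\sigma\circ L_i^{t+u}$ as a nonzero scalar multiple of $L_i^{t+u-s}$, and then apply $x_l^{t+u-s-w}$ (with $x_l\circ L_i\neq 0$) to lower the exponent to $w$, assembling $\beta$ as a scalar times $x_l^{t+u-s-w}\sigma$. The only cosmetic difference is that the paper justifies $\sigma\circ L_i^{t+u}=c_iL_i^{t+u-s}$ by noting it is a nonzero degree $t+u-s$ element of $I(P_i)^\perp$ (hence a multiple of $L_i^{t+u-s}$ by \propref{fat-Geramita}), whereas you argue directly from the derivation rule on powers of linear forms; the content is identical.
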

\begin{proof} Fix an integer $i \in \{1, \dots, r\} $ and consider a homogeneous form $\sigma \in R $ of degree $s$ under the hypothesis of \lemref{Lemma1}.
If $t\ge s,$  then
$$
\sigma \circ F_t=\sigma\circ\left(\frac{1}{(t+u)!}\sum_{i=1}^r \alpha_i L_i^{t+u}\right)=
\frac{1}{(t+u)!}\alpha_i (\sigma \circ L_i^{t+u}).
$$
Since $\sigma \circ L_i^{t+u}$ is a degree $t+u-s\ge 1$ non-zero homogeneous  element of   $I(P_i)^{\perp}$ there exist $c_i\in\res^*$
such that
$$
\sigma \circ L_i^{t+u}=c_i L_i^{t+u-s}.
$$
By hypothesis $\alpha_i\neq 0, $ so
$$
L_i^{t+u-s}= \frac{(t+u)!}{\alpha_i c_i} \sigma \circ F_t.
$$

Given an integer  $w\ge 1$ we consider an integer $t$ such that $t\ge s$  and $t+u-s\ge w$.
Let $l$ be an integer such that $x_l\circ L_i\neq 0$ (the point $P_i$ has $l$-th homogeneous coordinate $\ne 0$). So there is $c\in \res^*$ such that
$$
L_i^w= c x_l^{t+u-s-w}\circ L_i^{t+u-s}.
$$

From the last identity we get
$$
L_i^w= c x_l^{t+u-s-w}\circ L_i^{t+u-s}= \left( \frac{c (t+u)!}{\alpha_i c_i} x_l^{t+u-s-w} \sigma\right) \circ F_t,
$$
and we take
$$
\beta_i= \frac{c(t+u)!}{\alpha_i c_i} x_l^{t+u-s-w} \sigma.
$$
\end{proof}

\medskip
In the   following proposition we give  a different system of generators of $I(X)^{\perp}$ with respect to Proposition \ref{fat-Geramita}, suitable for checking  whether  $I(X)^{\perp}$ is  $G$-admissible.

\medskip
\begin{proposition}
\label{latte}
Let $X=\{P_1,\dots, P_r\}$ be a set of distinct points of $\mathbb P^n_{\res}$ and let $L_1, \dots, L_r$ be the associated linear forms.
Given $u\ge 1$ and $\alpha_1,\dots ,\alpha_r\in \res^*$ consider the   $R$-submodule $M$ of $\Gamma$  generated by the elements
$$   F_t:=\frac{1}{(t+u)!}\sum_{i=1}^r \alpha_i L_i^{t+u}     $$
for $t \ge 1.$
Then
\begin{enumerate}
    \item[(1)] $M= \langle L_1^t, \dots, L_r^t; \ t\ge 1 \rangle $
    \item[(2)] $ I(X)=\ann{M}$.
    \end{enumerate}
\end{proposition}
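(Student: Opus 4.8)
The plan is to establish the two statements essentially independently, since once $(1)$ is known, $(2)$ will follow quickly by combining it with \propref{fat-Geramita}. For $(1)$, the inclusion $M\subseteq\langle L_1^t,\dots,L_r^t;\ t\ge1\rangle$ is trivial: each generator $F_t$ is an explicit $\res$-linear combination of $L_1^{t+u},\dots,L_r^{t+u}$, all of which sit in the right-hand module. The substantive inclusion is the reverse one, and here I would invoke \lemref{Lemma2} directly. Fix $w\ge1$ and $i\in\{1,\dots,r\}$; choosing any integer $t\ge\Max\{s,s+w-u\}$, \lemref{Lemma2} produces a homogeneous $\beta\in R$ with $L_i^w=\beta\circ F_t$, so $L_i^w\in M$. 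Since $w$ and $i$ were arbitrary, $\langle L_1^t,\dots,L_r^t;\ t\ge1\rangle\subseteq M$, giving equality.

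For $(2)$, the cleanest route is to observe that $M=\langle L_1^t,\dots,L_r^t;\ t\ge1\rangle$ has the same graded pieces as $I(X)^\perp$ in every degree, and then appeal to $\ann_R(W)$ being determined by $W$ together with the duality in the excerpt. Concretely: by \propref{fat-Geramita}, $(I(X)^\perp)_j=\langle L_1^j,\dots,L_r^j\rangle_{\res}$ for all $j\ge0$; on the other hand, $M$ is the $R$-submodule generated by all the $L_i^t$ with $t\ge1$, and applying differential operators can only lower degree, so in degree $j$ one has $M_j=\langle\, x^\alpha\circ L_i^t\ :\ t\ge1,\ |t|-|\alpha|=j,\ 1\le i\le r\,\rangle_{\res}$. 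Each $x^\alpha\circ L_i^t$ is a scalar multiple of $L_i^{t-|\alpha|}=L_i^{j}$ (when $t>|\alpha|$; it is $0$ when $t\le|\alpha|$ unless one lands exactly on the linear form), so $M_j\subseteq\langle L_1^j,\dots,L_r^j\rangle_{\res}=(I(X)^\perp)_j$, and conversely every $L_i^j$ is itself a generator of $M$, so $M_j=(I(X)^\perp)_j$ for all $j$. Since both $M$ and $I(X)^\perp$ are graded $R$-submodules of $\Gamma$ with the same homogeneous components, $M=I(X)^\perp$, and therefore $\ann_R(M)=\ann_R(I(X)^\perp)$. Finally, since $I(X)$ is a saturated (hence $\circ$-closed and radical) homogeneous ideal, the biduality $\ann_R(I(X)^\perp)=I(X)$ holds by the correspondence recalled in the Preliminaries, so $\ann_R(M)=I(X)$.

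The main obstacle is entirely concentrated in part $(1)$, specifically in the reverse inclusion, and it has already been discharged by \lemref{Lemma2}: the non-obvious point is that a single ``generic enough'' element $\sigma$ from $I(Y)_s\setminus I(X)_s$ (with $Y=X\setminus\{P_i\}$) acts on $F_t$ so as to isolate the $L_i$-component up to a nonzero scalar, and then one raises the exponent back down by applying a suitable power of a variable $x_l$ with $x_l\circ L_i\ne0$. I would make sure to state explicitly that the constant $c_i\in\res^*$ appearing there is nonzero precisely because $\sigma\circ L_i^{t+u}\ne0$ by part $(1)$ of \lemref{Lemma1}, and that $I(P_i)^\perp$ in each degree is one-dimensional, spanned by the appropriate power of $L_i$ — this is what forces $\sigma\circ L_i^{t+u}$ to be a scalar times $L_i^{t+u-s}$. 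One small bookkeeping check worth flagging: in part $(2)$, the degree-zero piece needs a word (both $M_0$ and $(I(X)^\perp)_0$ equal $\res$), but this is immediate since $M$ contains $x^\alpha\circ L_i^{|\alpha|}$, a nonzero constant.
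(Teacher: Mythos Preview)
Your proposal is correct and follows essentially the same approach as the paper: the trivial inclusion $M\subseteq\langle L_i^t;\ t\ge1\rangle$ plus \lemref{Lemma2} for the reverse inclusion gives $(1)$, and then \propref{fat-Geramita} identifies this module with $I(X)^\perp$, whence $\ann_R(M)=I(X)$. The only difference is packaging: the paper derives $M=I(X)^\perp$ in one stroke (so $(1)$ and $(2)$ fall out simultaneously), whereas you re-verify $M=I(X)^\perp$ degree by degree after already having $(1)$; this is slightly redundant but not wrong.
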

\begin{proof}
From \propref{fat-Geramita} we have
that
$$
M=\langle F_t; t\ge 1\rangle
\subset
\langle L_i^w; w\ge 1, i=1,\dots, r\rangle=
I(X)^{\perp}.
$$
On the other hand, from \lemref{Lemma2} we get that
$$
I(X)^{\perp}=
\langle L_i^w; w\ge 1, i=1,\dots, r\rangle
\subset \langle F_t; t\ge 1\rangle=M,
$$
so
$I(X)^{\perp}=M$ and then $I(X)=\ann{M}$. Then (1) and (2) follow.
\end{proof}

\medskip

\medskip
\begin{example}
\label{recover-L}
Let $X$ be as in \exref{DGO-example}.
We consider the linear forms associated to the points of $X$:
$L_1=y_0+y_3$, $L_2=y_1+y_3$, $L_3=y_2+y_3$, $L_4=y_3$ and $L_5=-y_0+y_3$.
Hence, following the previous notations taking $u=1$, for all $\alpha_1,\dots ,\alpha_r\in \res^*$
we write
$$
F_t=\frac{1}{(t+1)!} \sum_{i=1}^5 \alpha_i L_i^{t+1}
$$
$t\ge 1$.
From the last result we have  $I(X)^{\perp}=\langle F_t; t\ge 1\rangle$.
Indeed, we can recover the powers $L_i^w$ from $F_{w+1}$, $w\ge 1$, see \lemref{Lemma2}.
For instance
$$
L_1^w =\beta_w \circ F_{w+1}
$$
with $\beta_w=\frac{1}{\alpha_1(w+2)(w+1)}(x_0(x_0+x_3))$.

On the other hand, $x_3$ is a non-zero divisor of $R/I(X)$ but
$\mathcal F=\{F_t; t\ge 1\}$ is not $G$-admissible with respect to $x_3$ because
$x_3\circ F_1=(\alpha_1-\alpha_5)y_0+\alpha_2y_1+ \alpha_3y_2+(\alpha_1+ \dots+\alpha_5)y_3$, and this is $\neq 0$ since  $\alpha_2,\alpha_3 \neq 0$. Recall that $X$ is not arithmetically Gorenstein.
\end{example}

\medskip
{ { Without  loss of  generality, given a set of distinct points $X=\{P_1,\dots, P_r\}$, after a change of coordinates,   { {we may assume that $x_n(P_i) =1  $   for every $i=1, \dots, r;$ }} that is $P_i=[a_{0,i}, \dots, a_{n-1,i}, 1].$  In particular this means that  we assume  $x_n$ is not zero divisor in $R/I(X) $ and $L_i = a_{0,i}y_0+ \dots+  a_{n-1,i} y_{n-1} +y_n $ is the associated form to $P_i$. Through this section we always keep this assumption, which is not essential, but it simplifies  the presentation. Just to be clear we will specify that
  $X=\{P_1,\dots, P_r\}$ is  a set of distinct points of $\mathbb P^n_{\res}\setminus \{x_n=0\} $  and  when we will write   that $I(X)^{\perp}$ is     $G$-admissible, it will mean  that it  is
  $G$-admissible  with respect to $x_n$.
 }}

\medskip
Recalling   Definition \ref{G-modules},   a family  $\mathcal F=\{F_t; t\ge 1\}$ is $G$-admissible with respect to a linear form  $x_n,$ if the following conditions hold:
\medskip
\begin{enumerate}
\item[(1)] for all $t\in \mathbb N_+$
$$
{x_n} \circ F_t =
\left\{
\begin{array}{ll}
 F_{t-1}   & \text{ if }  t> 1 \ \\
0 & \text{ otherwise.}
\end{array}
\right.
$$
\item[(2)] $\ann_R \langle F_t\rangle \circ F_{t+1}=\langle F_{1}\rangle$ for all $t\in \mathbb N_+$.
\end{enumerate}

\medskip
\begin{remark}   \label{1}
It is easy to check  that if we consider the family of elements     $\{F_t\} $ as defined  in  Proposition \ref{latte}, then ${x_n} \circ F_t= F_{t-1}$  if  $t> 1. $  Hence condition (1) of the
$G$-admissibility for the above family is equivalent to
 $ x_n \circ F_1 =0\ (t=1)$  which is in turn
  equivalent   to $$\sum_{i=1}^r \alpha_i L_i^{u}=0.$$  This last equality implies that $\HF_X(u)= \dim_{\res} \langle L_1^u, \dots, L_r^u \rangle_{\res} < r. $ Since $\HF_X(t)= r $  for all $t \ge s $ where $s$ is the socle degree of $R/I(X),$ then $u\le s-1.$
Notice that if  $\Delta \HF_X(s)=1$ (this is the case for being  $X$   arithmetically Gorenstein), then
the $\res$-vector space generated by
$L_1^{s-1},\dots ,L_r^{s-1}$
has dimension $\HF_X(s-1)=r-1$. From this we get that
$\alpha_1, \dots , \alpha_n$ are unique (up to the multiplication by scalars in $\res^*$).
\end{remark}

Given a set of points, we have a  good candidate  for testing if $X$ is arithmetically Gorenstein or equivalently if $I(X)^{\perp} $ is $G$-admissible  according to Definition \ref{G-modules}. We can prove  the following result.

\medskip
\begin{corollary} \label{candidate} Let $X=\{P_1,\dots, P_r\}$ be a set of distinct points of $\mathbb P^n_{\res}\setminus \{x_n=0\}. $   Let $L_1, \dots, L_r$ be the corresponding associated linear forms and let $s$ be the socle degree of $R/I(X).$ The following conditions are equivalent

\begin{enumerate}
    \item[(a)] $X$ is arithmetically Gorenstein,
    \item[(b)] there exist   $\alpha_1, \dots, \alpha_r \in \res^* $ (unique up to scalars in $\res^*$) such that the $R$-submodule $M$ of $\Gamma$ generated by the elements
$$ F_t:=\frac{1}{(t+s-1)!}\sum_{i=1}^r \alpha_i L_i^{t+s-1} $$
for $t\ge 1, $  is $G$-admissible.
\end{enumerate}
If this is the case, then $I(X)^{\perp}=M.$
\end{corollary}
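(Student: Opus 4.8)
The proof will combine \thmref{bijecGor} with \propref{latte} and \remref{1}. The equivalence we must establish is between $X$ being arithmetically Gorenstein and the existence of suitable $\alpha_i \in \res^*$ making the submodule $M=\langle F_t : t\ge 1\rangle$ with $F_t = \frac{1}{(t+s-1)!}\sum_{i=1}^r \alpha_i L_i^{t+s-1}$ a $G$-admissible $R$-submodule of $\Gamma$ with respect to $x_n$. Here the choice $u = s-1$ in \propref{latte} is forced, so $\deg F_t = t + s - 1$ and in particular $\deg F_1 = s$, the socle degree.

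\medskip
\noindent\textbf{Proof sketch.}
First suppose $X$ is arithmetically Gorenstein. Then by \thmref{DGO} we have $\Delta\HF_X(s) = \Delta\HF_X(0) = 1$, so $\HF_X(s-1) = r-1$, and by \remref{1} there exist $\alpha_1,\dots,\alpha_r \in \res^*$, unique up to a common scalar, with $\sum_{i=1}^r \alpha_i L_i^{s-1} = 0$ (uniqueness follows because the kernel of the evaluation map $\res^r \to \langle L_1^{s-1},\dots,L_r^{s-1}\rangle_{\res}$ is one-dimensional; one must also check each $\alpha_i \ne 0$, which holds because deleting any single point strictly drops the Hilbert function in degree $s-1$ by \thmref{DGO}(2), so no proper subset of the $L_i^{s-1}$ is dependent). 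With this choice, $x_n \circ F_1 = \frac{1}{(s-1)!}\sum \alpha_i L_i^{s-1} = 0$, and $x_n \circ F_t = F_{t-1}$ for $t > 1$ automatically, so condition (1) of $G$-admissibility holds. By \propref{latte}, $M = I(X)^\perp$ and $\ann_R(M) = I(X)$, so $A = R/\ann_R(M)$ is one-dimensional Gorenstein with $x_n$ a nonzerodivisor modulo $\ann_R(M)$. Now $\deg F_1 = s$ equals the socle degree of $A$ (the socle degree of $R/I(X)$ is $s$, and this is preserved under the Artinian reduction by $x_n$), so \propref{1-suffices} applies: after rescaling the $F_t$ by scalars $\alpha_l \in \res^*$, the family becomes $G$-admissible with respect to $x_n$. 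Since rescaling does not change the generated $R$-submodule, $M$ itself is $G$-admissible, and $I(X)^\perp = M$.

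\medskip
Conversely, suppose such $\alpha_i$ exist with $M = \langle F_t : t\ge 1\rangle$ $G$-admissible with respect to $x_n$. By \propref{latte}, $M = I(X)^\perp$ regardless of the values of the $\alpha_i$, hence $I(X)^\perp$ is $G$-admissible. Then $R/\ann_R(I(X)^\perp) = R/I(X)$ falls under case (i) of \thmref{bijecGor}, i.e. $R/I(X)$ is one-dimensional Gorenstein, which is exactly the statement that $X$ is arithmetically Gorenstein. The final assertion $I(X)^\perp = M$ is again immediate from \propref{latte}.

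\medskip
\noindent\textbf{Main obstacle.}
The substantive point is the forward direction, and within it the verification that \propref{1-suffices} is genuinely applicable — that is, that its hypotheses (1) and (2) are met by the family $\{F_t\}$ built from the $\alpha_i$. Hypothesis (1) is exactly condition (1) of $G$-admissibility, which we get from $\sum \alpha_i L_i^{s-1} = 0$ via \remref{1}; hypothesis (2), that $\deg F_1$ equals the socle degree of the Artinian reduction, requires identifying the socle degree of $R/(I(X)+(x_n))$ with $s = \reg(R/I(X))$, which is standard for a nonzerodivisor linear form on a one-dimensional Cohen--Macaulay graded ring. The other delicate bookkeeping item is establishing $\alpha_i \ne 0$ for all $i$ in the forward direction and deriving their uniqueness; both rest on the Cayley--Bacharach condition \thmref{DGO}(2), which guarantees that omitting one point lowers $\HF_X$ in every degree $\le s-1$, forcing the dependence relation among the $L_i^{s-1}$ to involve all $r$ coefficients nontrivially. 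Everything else is a direct invocation of the cited results.
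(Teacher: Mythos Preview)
Your proof is correct and follows the paper's intended approach. The paper itself offers no formal argument beyond the sentence ``From Proposition~\ref{latte} and Remark~\ref{1}, we have the following result,'' and your proof simply makes explicit the additional ingredients the paper leaves implicit---in particular the appeals to \thmref{bijecGor} (for both directions), to \propref{1-suffices} (to pass from condition~(1) of $G$-admissibility plus $\deg F_1=s$ to full $G$-admissibility in the forward direction), and to \thmref{DGO} (to ensure each $\alpha_i\neq 0$ and the uniqueness up to scalar).
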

\begin{proof} We assume  (a),   then   by Theorem \ref{bijecGor} we know that $I(X)^{\perp}$ is
$G$-admissible and  (b)  follows by  Proposition \ref{fat-Geramita} and Remark \ref{1}. Conversely assume  (b),   then by Proposition \ref{latte} we have   $I(X)=\ann{M}$ and hence  (a) follows  by Theorem \ref{bijecGor}.
\end{proof}

\medskip
  Our aim is now   to deepen and simplify condition (2) of $G$-admissibility.
In general we have:

\begin{lemma}
\label{sequence}
Let $X=\{P_1,\dots, P_r\}$ be a set of distinct points of
$\mathbb P^n_{\res}\setminus \{x_n=0\} $.
Let $\{F_t\}_{t\ge 1}$ be a family of elements of $I(X)^{\perp}$ satisfying the condition {\rm{(1)}} of
$G$-admissibility with respect to $x_n$.
Then for all $t\ge 1$ it holds
$$
\langle F_1\rangle \subseteq  \ann(F_1)\circ F_2
\subseteq \dots
\subseteq \ann(F_t)\circ F_{t+1}
\subseteq \ann(F_{t+1})\circ F_{t+2}
\subseteq (I(X)+(x_n))^{\perp}
$$
\end{lemma}
\begin{proof}
Since $F_1=x_n\circ F_2$ we get
$$
F_1= x_n\circ F_2\in \ann(F_1)\circ F_2
$$
so $\langle F_1\rangle \subseteq  \ann(F_1)\circ F_2$.

For all $i\ge 1$ we have
$x_n \ann(F_{i})\subset \ann(F_{i+1})$ since:
$$
(x_n \ann(F_{i}))\circ F_{i+1}= \ann(F_{i})\circ (x_n\circ F_{i+1})=
\ann(F_{i})\circ F_{i}=0.
$$
Hence we obtain
$$
\langle F_1\rangle \subseteq  \ann(F_1)\circ F_2
\subseteq \dots
\subseteq \ann(F_t)\circ F_{t+1}
\subseteq \ann(F_{t+1})\circ F_{t+2}.
$$

Being $F_{t+2}\in I(X)^{\perp}$ we deduce that
$\ann(F_{t+1})\circ F_{t+2} \subseteq I(X)^{\perp}$.
On the other hand
$$
x_n\circ (\ann(F_{t+1})\circ F_{t+2} )=
\ann(F_{t+1})\circ (x_n\circ F_{t+2} )=
\ann(F_{t+1})\circ F_{t+1} =0.
$$
Hence
$\ann(F_{t+1})\circ F_{t+2}\subseteq (x_n)^{\perp} $.
Recall that $(I(X)+(x_n))^{\perp}=I(X)^{\perp} \cap (x_n)^{\perp}$, so
$$\ann(F_{t+1})\circ F_{t+2}
\subseteq (I(X)+(x_n))^{\perp}.$$
\end{proof}

\medskip
Recall that condition $(2)$ of $G$-admissibility for $\mathcal F$ prescribes the equality
$$
\langle F_1\rangle= \ann(F_t)\circ F_{t+1},
$$
for all $t\ge 1$.

\bigskip
In the following example we present a set of points where $I(X)^{\perp}$ is generated by a family $\{F_t\}$ of elements of $\Gamma$ verifying condition $(1)$ of the $G$-admissibility, but not  condition $(2).$

\begin{example}
\label{no-gor}
Consider the points $P_1=[1,0,0,1]$, $P_2=[0,1,0,1]$, $P_3=[0,0,1,1]$, $P_4=[0,0,0,1]$,
$P_5=[1,1,1,1]$ and $P_6=[-1,1,-1,1]$ in $\mathbb P_{\res}^3$.
The socle degree of $R/I(X)$ is $s=2$ and $X$ is not arithmetically Gorenstein, for instance because the $h$-vector is $(1, 3, 2)$.
If we take $\alpha_1=-1, \alpha_2=-3, \alpha_3=-1, \alpha_4=2, \alpha_5=2, \alpha_6=1$ then
the family $\mathcal F$ defined by the polynomials
$$ F_t=\frac{1}{(t+1)!}\sum_{i=1}^6 \alpha_i L_i^{t+1},
$$
$t\ge 1$, satisfies the condition $(1)$ of
$G$-admissibility with respect to $x_3$.
On the other hand, we can verify that
$\ann(F_1)\circ F_2=\langle F_1\rangle, $ but
$\ann(F_2)\circ F_3  \supsetneq\langle F_1\rangle$ (see the computation below where $A,$ or $B,$ do not belong to  $\langle F_1\rangle$).
Hence the family $\mathcal F$ does not satisfies the condition $(2)$ of $G$-admissibility and indeed $X$ is not Gorenstein.
Notice that
$$
\ann(F_2)\circ F_3 =(I(X)+(y_3))^{\perp}=\langle
A:=y_0y_1+y_0y_2+y_1y_2, B:=y_0^2+2y_0y_2+y_2^2\rangle.
$$
and
$$F_1=\frac{1}{2}(y_0^2+y_0y_1+3y_0y_2+y_1y_2+y_2^2)=
\frac{1}{2}(A+B).
$$
\end{example}

\bigskip
The following  result proves that the chain of \lemref{sequence} stabilizes at $(I(X)+(x_n))^{\perp}.$

\medskip
\begin{proposition}
\label{cafe}
Let $X=\{P_1,\dots, P_r\}$ be a set of distinct points of $\mathbb P^n_{\res}\setminus \{x_n=0\} $  of socle degree $s$ and let $L_1, \dots, L_r$ be the associated linear forms. Let  $I(X)^{\perp}$ be generated by the family $\mathcal F=\{F_t; t\ge 1\}$ where
$$ F_t:=\frac{1}{(t+s-1)!}\sum_{i=1}^r \alpha_i L_i^{t+s-1}$$
with $\alpha_1,\dots ,\alpha_r\in \res^*.$
Assume that the family $\mathcal F=\{F_t; t\ge 1\}$ satisfies the condition $(1)$ of $G$-admissibility.
Then for all $t\ge s$
$$
\ann(F_t)\circ F_{t+1} =(I(X)+(x_n))^{\perp}.
$$
\end{proposition}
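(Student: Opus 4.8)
The plan is to establish the nontrivial inclusion $(I(X)+(x_n))^\perp\subseteq \ann(F_t)\circ F_{t+1}$ degree by degree, the reverse inclusion being part of the chain $(*)$. First I would observe that both sides vanish in degrees $>s$: the right-hand side by $(*)$, and the left-hand side because it is the inverse system of the Artinian reduction $R/(I(X)+(x_n))$, whose Hilbert function is the $h$-vector $(h_0,\dots,h_s)$. So it suffices to prove $((I(X)+(x_n))^\perp)_d\subseteq (\ann(F_t)\circ F_{t+1})_d$ for $0\le d\le s$ and $t\ge s$.

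The computational core is the behaviour of the apolarity action on powers of the normalized linear forms. Since $x_n(P_i)=1$, a homogeneous $g\in R$ of degree $e\le m$ satisfies $g\circ L_i^{m}=\frac{m!}{(m-e)!}\,g(P_i)\,L_i^{m-e}$, where $g(P_i)$ is the value at the affine coordinates of $P_i$. Taking $m=t+s$ and $m=t+s-1$ with $\deg g=t+s-d$ gives
$$ g\circ F_{t+1}=\frac{1}{d!}\sum_{i=1}^{r}\alpha_i\,g(P_i)\,L_i^{d},\qquad g\circ F_{t}=\frac{1}{(d-1)!}\sum_{i=1}^{r}\alpha_i\,g(P_i)\,L_i^{d-1}\quad(d\ge 1), $$
and for $d=0$ one has $\deg g=t+s>\deg F_t$, hence $g\circ F_t=0$ automatically and $R_{t+s}\subseteq \ann(F_t)$.

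Then, fixing $t\ge s$ and $1\le d\le s$, I would take $G\in((I(X)+(x_n))^\perp)_d$, write $G=\sum_{i} c_i L_i^{d}$ using \propref{fat-Geramita}, and note that $x_n\circ G=0$ forces $\sum_i c_i L_i^{d-1}=0$, because $x_n\circ L_i^{d}=d\,L_i^{d-1}$ and the characteristic is zero. Since $t+s-d\ge s$ (this is where the hypothesis $t\ge s$ is used), the evaluation map $R_{t+s-d}\to\res^{r}$ at the $r$ distinct points is surjective, so I can pick a homogeneous $g$ of degree $t+s-d$ with $g(P_i)=d!\,c_i/\alpha_i$ for every $i$. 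The first displayed identity then gives $g\circ F_{t+1}=\sum_i c_i L_i^{d}=G$, while the second gives $g\circ F_t=d\sum_i c_i L_i^{d-1}=0$; thus $g\in\ann(F_t)$ and $G=g\circ F_{t+1}\in\ann(F_t)\circ F_{t+1}$. The case $d=0$ is identical, using $R_{t+s}\subseteq\ann(F_t)$ and surjectivity of $R_{t+s}\to\res^{r}$ to realize an arbitrary constant. Together with $(*)$, this yields the asserted equality.

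I do not anticipate a genuine obstacle: the whole argument rests on the two apolarity identities above together with the surjectivity of evaluation of degree $\ge s$ forms at the $r$ points (the Hilbert function has stabilized at $r$ by the regularity $s$). The only delicate point is the degree bookkeeping: an element realizing a given $G$ of degree $d$ as $g\circ F_{t+1}$ must have degree $t+s-d$, and the evaluation step needs $t+s-d\ge s$, i.e. $t\ge d$, which is why the hypothesis $t\ge s$ (covering $d=s$) is exactly what is required; the companion condition $g\in\ann(F_t)$ then automatically concerns the degree-$(d-1)$ component, where it coincides with the vanishing $\sum_i c_i L_i^{d-1}=0$ forced by $x_n\circ G=0$.
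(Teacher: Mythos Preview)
Your proof is correct, and the overall strategy coincides with the paper's: given a homogeneous $G\in(I(X)+(x_n))^\perp$ of degree $d\le s$, produce a homogeneous operator $\delta$ with $\delta\circ F_{t+1}=G$, and then deduce $\delta\in\ann(F_t)$ from $x_n\circ G=0$ together with $x_n\circ F_{t+1}=F_t$. The difference lies in how $\delta$ is constructed and in the handling of the parameter $t$. The paper first reduces to the single case $t=s$ via the chain $(*)$, then invokes \lemref{Lemma2} (itself resting on \lemref{Lemma1}) to realize each $L_i^d$ individually as $\beta_i\circ F_{s+1}$ and takes $\delta=\sum_i\lambda_i\beta_i$. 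You instead work for every $t\ge s$ at once and build $\delta$ in one shot from the closed formula $g\circ F_{t+1}=\frac{1}{d!}\sum_i\alpha_i\,g(P_i)\,L_i^{d}$, choosing $g$ via the surjectivity of the evaluation map $R_{t+s-d}\to\res^r$ (available because $\HF_X(t+s-d)=r$ once $t+s-d\ge s$). Your route is self-contained---it bypasses \lemref{Lemma1} and \lemref{Lemma2}---and makes visible exactly why the hypothesis $t\ge s$ is the right threshold: it is precisely the condition $t\ge d$ needed for evaluation surjectivity in every degree $d\le s$. The paper's route has the virtue of reusing machinery already set up for \propref{latte}.
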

\begin{proof}
Since $\mathcal F$ satisfies the condition $(1)$ of $G$-admissibility with respect to $x_n$ we have
$$
\langle F_1\rangle
\subset \ann(F_t)\circ F_{t+1}
\subset \ann(F_{t+1})\circ F_{t+2}
\subset (I(X)+(x_n))^{\perp}
$$
for all $t\ge 1$.
We only need to prove that
$$
(I(X)+(x_n))^{\perp}
\subset
\ann(F_s)\circ F_{s+1}.
$$
Let $G\in (I(X)+(x_n))^{\perp}$ be an homogeneous  form.
Since $(I(X)+(x_n))^{\perp}_t=0$ for all $t\ge s+1$,
we may assume that $w=\deg(G)\le s$.

From \lemref{Lemma2} for all $i=1,\dots ,r$ there is an homogeneous form $\beta_i$ of degree
$2s-w$
such that
$$
L_i^w= \beta_i\circ F_{s+1}.
$$
Since $G\in I(X)^{\perp}_w$ we have
$$
G=\sum_{i=1}^r \lambda_i L_i^w=
(\sum_{i=1}^r \lambda_i\beta_i)\circ F_{s+1}
$$
where $\lambda_i\in\res$,
we write $\delta= \sum_{i=1}^r \lambda_i\beta_i$.
Since $x_n\circ G=0$ we get
$$
0=x_n\circ G=\delta \circ F_s,
$$
so $\delta \in \ann(F_s)$ and then
$G=\delta \circ F_{s+1}\in \ann(F_s)\circ F_{s+1}$.
\end{proof}

\medskip
We are ready to rephrase the $G$-admissibility for the inverse system of a   set of distinct points $X=\{P_1,\dots, P_r\}$ of $\mathbb P^n_{\res} $  in terms of a finite number of conditions.

\medskip
 \begin{proposition}[$G$-admissibility for points]
  \label{G-points} Let $X=\{P_1,\dots, P_r\}$ be a set of distinct points of $\mathbb P^n_{\res}\setminus \{x_n=0\} $ of socle degree $s$ and let $L_1, \dots, L_r$ be the associate linear forms. Given $\alpha_1,\dots ,\alpha_r\in \res^*$ consider the   $R$-submodule $M$ of $\Gamma$  generated by the elements
 $$   F_t:=\frac{1}{(t+s-1)!}\sum_{i=1}^r \alpha_i L_i^{t+s-1}     $$
 Then $ M= I(X)^{\perp}$ is $G$-admissible (and equivalently  $X$ is arithmetically Gorenstein) if and only if the following conditions hold:
 \begin{enumerate}
    \item[(1)] $\sum_{i=1}^r \alpha_i L_i^{s-1}=0$,
    \item[(2)] $\langle F_1\rangle =\ann(F_s)\circ F_{s+1}$.
 \end{enumerate}
  \end{proposition}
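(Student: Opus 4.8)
The plan is to unwind Definition~\ref{G-modules} for the concrete family $\mathcal F=\{F_t\}_{t\ge 1}$ and see that the infinitely many conditions there collapse onto the two finite conditions (1) and (2). As a preliminary remark, Proposition~\ref{latte} (applied with $u=s-1$) already gives $M=I(X)^\perp$ for \emph{any} choice of $\alpha_1,\dots,\alpha_r\in\res^*$, and by Corollary~\ref{candidate} (hence ultimately Theorem~\ref{bijecGor}) $G$-admissibility of $M$ is equivalent to $X$ being arithmetically Gorenstein; so the substance of the statement is the equivalence ``$M$ is $G$-admissible $\iff$ (1) and (2) hold''.

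First I would dispose of condition~(1) of Definition~\ref{G-modules}. Because $x_n(P_i)=1$ for all $i$, a one-line differentiation gives $x_n\circ L_i^{t+s-1}=(t+s-1)\,L_i^{t+s-2}$, whence $x_n\circ F_t=F_{t-1}$ for every $t>1$ automatically --- this is Remark~\ref{1}. Consequently, part~(1) of $G$-admissibility holds for $\mathcal F$ precisely when in addition $x_n\circ F_1=0$, and by the same computation this is equivalent to $\sum_{i=1}^r\alpha_i L_i^{s-1}=0$, i.e.\ to condition~(1) of the Proposition. From here on I would assume condition~(1), which is exactly what makes the chain $(*)$ and Proposition~\ref{cafe} applicable.

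It then remains to show that, under condition~(1), part~(2) of Definition~\ref{G-modules} --- the equality $\ann(F_t)\circ F_{t+1}=\langle F_1\rangle$ for \emph{all} $t\ge 1$ --- is equivalent to its single instance $t=s$, which is condition~(2) of the Proposition. One direction is immediate by specializing $t=s$. For the converse, assume $\ann(F_s)\circ F_{s+1}=\langle F_1\rangle$. Proposition~\ref{cafe} says $\ann(F_t)\circ F_{t+1}=(I(X)+(x_n))^\perp$ for all $t\ge s$; taking $t=s$ yields $(I(X)+(x_n))^\perp=\langle F_1\rangle$, so the equality $\ann(F_t)\circ F_{t+1}=\langle F_1\rangle$ holds for every $t\ge s$. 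For $1\le t<s$ the chain $(*)$ sandwiches $\ann(F_t)\circ F_{t+1}$ between $\langle F_1\rangle$ and $\ann(F_s)\circ F_{s+1}=\langle F_1\rangle$, forcing equality there too. This gives part~(2) of $G$-admissibility for all $t$ and completes the argument.

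I do not expect a genuine obstacle: the analytic content has already been isolated in Lemmas~\ref{Lemma1} and \ref{Lemma2}, Proposition~\ref{latte} and Proposition~\ref{cafe}, and this proposition is the bookkeeping that packages them. The one point deserving care is the order of invocation --- the chain $(*)$, in particular the inclusion $\langle F_1\rangle\subseteq\ann(F_t)\circ F_{t+1}$ which rests on $x_n\circ F_1=0$, together with Proposition~\ref{cafe}, may only be used once condition~(1) is in force --- and one should note in passing the degenerate case $s=1$, where $X$ is a pair of points (automatically a complete intersection) and the statement is trivial, so that Proposition~\ref{latte} applies verbatim with $u=s-1\ge1$ in the remaining case.
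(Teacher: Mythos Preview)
Your proof is correct and follows essentially the same route as the paper's own argument: the forward direction is immediate from Definition~\ref{G-modules}, and for the converse you invoke Remark~\ref{1} for condition~(1) and then combine Proposition~\ref{cafe} with the chain~$(*)$ to reduce condition~(2) of $G$-admissibility to the single instance $t=s$. The paper's proof is simply a terser version of what you have written; your added detail about how the chain~$(*)$ squeezes the cases $1\le t<s$ is exactly the mechanism the paper leaves implicit.
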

 \begin{proof}
 If $M$ is $G$-admissible, then $(1)$ and $(2)$ follows by the definition,  because $\langle F_1\rangle =\ann(F_t)\circ F_{t+1}$ for all $t \ge 1, $ in particular for $t=s,$ and   $x_n \circ F_1=0. $
 Assume now $(1)$ and $(2)$.
 As a consequence of the Proposition \ref{cafe},   condition $(2)$ of $G$-admissibility can be checked just in the case $t=s$, i.e. condition $(2)$ in Definition \ref{G-modules}
 is equivalent to
 $\langle F_1\rangle =\ann(F_s)\circ F_{s+1}$. Concerning (1), it follows by Remark \ref{1}.
   \end{proof}

\medskip
 Next result   answers one of the main question of this paper.   Notice that we characterize the inverse system of Gorenstein points (a not finitely generated $R$-module!)   in terms of one single  polynomial of $\Gamma$ by analogy with the Artinian case.   For this reason the result can be considered as a strengthening of \cite[Theorem 2.2]{Toh14}.

\medskip
\begin{theorem}
\label{char-points-gor}
Let $X=\{P_1,\dots, P_r\}$ be a set of points of $\mathbb P^n_{\res}\setminus \{x_n=0\} $ of socle degree $s$ and let $L_1, \dots, L_r$ be the associate linear forms.
Then the following conditions are equivalent:
\begin{enumerate}
    \item $X$ is arithmetically Gorenstein,
    \item there exists a polynomial $F=\sum_{i=1}^r \alpha_i L_i^{s}$ with
     $\alpha_1,\dots, \alpha_r\in \res$  such that
    \begin{enumerate}
        \item[(a)] $\sum_{i=1}^r \alpha_i L_i^{s-1}=0$,
        \item[(b)] $\dim_{\res}\langle F\rangle \ge r.$
    \end{enumerate}
\end{enumerate}

\noindent
If this is the case,  then $\alpha_1,\dots,
\alpha_r\in \res^*$ are uniquely  determined (up to scalars in   $\res^*$),
$I(X)^\perp =\langle F_t; t\ge 1\rangle$
where $F_t=\frac{1}{(t+s-1)!}\sum_{i=1}^r \alpha_i L_i^{t+s-1}$, in particular $F_1=\frac{1}{s!} F$ and
the family $\{F_t;t\ge 1\}$ is $G$-admissible.
\end{theorem}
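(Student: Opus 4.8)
The plan is to derive the theorem from \corref{candidate} and \propref{G-points} by showing that conditions $(a)$ and $(b)$ translate exactly into conditions $(1)$ and $(2)$ of \propref{G-points}, once one knows that the scalars $\alpha_1,\dots,\alpha_r$ are all nonzero; the latter will itself be forced by $(a)$ and $(b)$.

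The conceptual core is a rigidity remark. Given $F=\sum_{i=1}^r\alpha_i L_i^{s}$, the normalization $x_n(P_i)=1$ gives $x_n\circ L_i^{s}=s\,L_i^{s-1}$, so $(a)$ is precisely the identity $x_n\circ F=0$. By \propref{fat-Geramita} we have $F\in(I(X)^{\perp})_s\subseteq I(X)^{\perp}$, and since $x_n$ commutes with all differential operators, $x_n\circ F=0$ drags the whole cyclic module $\langle F\rangle$ into $(I(X)+(x_n))^{\perp}$. This last module is the inverse system of the Artinian reduction $R/(I(X)+(x_n))$, hence $\dim_{\res}(I(X)+(x_n))^{\perp}=\sum_{j\ge 0}h_j=r$. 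Therefore, assuming $(a)$, condition $(b)$ is equivalent to the equality $\langle F\rangle=(I(X)+(x_n))^{\perp}$.

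First I would show that $(a)$ and $(b)$ together force $\alpha_i\in\res^{*}$ for all $i$: if $\alpha_k=0$, one reruns the previous paragraph with $Y=X\setminus\{P_k\}$ in place of $X$ (note $x_n$ is still a nonzerodivisor modulo $I(Y)$ and $\deg R/I(Y)=r-1$), obtaining $\langle F\rangle\subseteq(I(Y)+(x_n))^{\perp}$ and hence $\dim_{\res}\langle F\rangle\le r-1$, contradicting $(b)$. With all $\alpha_i$ nonzero, \propref{latte} and \remref{1} apply to the family $F_t=\frac{1}{(t+s-1)!}\sum_{i=1}^r\alpha_i L_i^{t+s-1}$: we get $I(X)^{\perp}=\langle F_t;\,t\ge 1\rangle$, $F_1=\frac{1}{s!}F$, and $(a)$ is exactly condition $(1)$ of $G$-admissibility for $\{F_t\}$. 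By \propref{cafe}, $\ann(F_s)\circ F_{s+1}=(I(X)+(x_n))^{\perp}$, so condition $(2)$ of \propref{G-points}, namely $\langle F_1\rangle=\ann(F_s)\circ F_{s+1}$, is equivalent to $\langle F\rangle=(I(X)+(x_n))^{\perp}$, hence (by the second paragraph) to $(b)$. Combining with \propref{G-points} and \corref{candidate} yields $(1)\Leftrightarrow(2)$; the additional assertions then come for free, uniqueness of the $\alpha_i$ up to a scalar being \remref{1} together with the fact that $X$ Gorenstein forces $\Delta\HF_X(s)=1$. For the direction $(1)\Rightarrow(2)$ one instead takes the $\alpha_i\in\res^{*}$ furnished by \corref{candidate} and reads the same chain backwards. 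The degenerate cases with $r\le n+1$ (forcing $s\le 1$) are elementary and checked directly.

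I expect the genuine obstacle to be exactly the step ``$(a)+(b)\Rightarrow$ all $\alpha_i\ne 0$'': every later use of \propref{latte}, \remref{1} and \propref{cafe} presupposes it, so it has to be cleared first, and it is precisely here that the inequality in $(b)$ must be upgraded --- via the $r$-dimensional ceiling $(I(X)+(x_n))^{\perp}$ --- to the usable equality $\langle F\rangle=(I(X)+(x_n))^{\perp}$.
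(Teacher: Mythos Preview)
Your proposal is correct and follows essentially the same route as the paper: both directions are obtained by translating conditions $(a)$ and $(b)$ into conditions $(1)$ and $(2)$ of \propref{G-points} via \propref{cafe} and the equality $\dim_{\res}(I(X)+(x_n))^{\perp}=r$, then invoking \corref{candidate}. In fact you are more careful than the paper on one point: the paper's proof of $(2)\Rightarrow(1)$ invokes \propref{G-points} (and implicitly \propref{latte}, \propref{cafe}) directly, but those results are stated for $\alpha_i\in\res^{*}$, whereas the hypothesis $(2)$ only assumes $\alpha_i\in\res$; your argument that a vanishing $\alpha_k$ would trap $\langle F\rangle$ inside the $(r-1)$-dimensional module $(I(Y)+(x_n))^{\perp}$ with $Y=X\setminus\{P_k\}$ cleanly closes this gap.
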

\begin{proof}
Assume that $X$ is arithmetically Gorenstein.
Then by Corollary \ref{candidate} there exist $\alpha_1, \dots, \alpha_r \in \res^* $ such that the  $R$-submodule $M$ of $\Gamma$ generated by  $F_t=\frac{1}{(t+s-1)!}\sum_{i=1}^r \alpha_i L_i^{t+s-1}$ for $t \ge 1$ is $G$-admissible and $I(X)^{\perp}=M. $
It is enough to take $F=s! F_1$.  In fact by Proposition \ref{G-points} we have $(a)$ and  the equality $\langle F_1 \rangle = \langle F \rangle =  \ann(F_s)\circ F_{s+1}.$ Hence by Proposition \ref{cafe}   we have $\langle F_1 \rangle = \langle F \rangle =  \ann(F_s)\circ F_{s+1} = (I(X) + (x_n)^{\perp}. $ Now $R/I(X) + (x_n)$ is an Artinian ring since $x_n$ is a non-zero divisor in $R/I(X)$, hence of finite dimension as $\res$-vector space. Its length equals the dimension and
$\dim_{\res} R/(I(X) + (x_n))= r. $
Hence $\dim_{\res}\langle F\rangle = \length (R/(I(X)+(x_n))) =  r.$

Conversely, by Proposition \ref{G-points} it is enough to prove that $ \langle F_1 \rangle =  \ann(F_s)\circ F_{s+1}.$ This is clear because $\langle F_1\rangle \subseteq   \ann(F_s)\circ F_{s+1}$ and by  $(b)$
$$\dim_{\res}\langle F\rangle \ge r =
\length (R/I(X)+(x_n))= \length ( \ann(F_s)\circ F_{s+1}).$$
The last equality follows again by Proposition \ref{cafe}.
\end{proof}

\medskip
\begin{remark}
\label{Pi} Let $X=\{P_1,\dots, P_r\}$ be a set of points of $\mathbb P^n_{\res}  $ of socle degree $s$ and let $L_1, \dots, L_r$ be the associate linear forms.   Denote  $P_i=[a_{0,i}, \dots, a_{n,i}]$ for $i=1, \dots, r.$
Along the paper we assumed for simplicity  that $x_n$ is a non-zero divisor of $R/I(X), $ that is $x_n(P_i) =a_{n,i} \neq 0$ for all $i=1, \dots,r. $  In particular we   assumed $a_{n,i}=1, $    hence $x_n \circ L_i=1 $ for all $i=1, \dots, r$.

In general, if  $z=b_0 x_0+\dots+b_n x_n$ is any linear form in $R$ such that $z$ is a regular element of  $R/I(X), $  it means that  $z(P_i)= z \circ L_i=  b_0 a_{0,i}+\cdots+b_n a_{n,i}  \neq 0  $  for all $i=1, \dots,r.$ We remark  that $(1)$  and $(2) $ in Theorem \ref{char-points-gor} can be rephrased  as   follows:
\begin{enumerate}
    \item $X$ is arithmetically Gorenstein,
    \item there exists a polynomial $F=\sum_{i=1}^r {\frac{\alpha_i}{z(P_i)}} L_i^{s}$ with
     $\alpha_1,\dots, \alpha_r\in \res$  such that
    \begin{enumerate}
        \item[(a)] $\sum_{i=1}^r \alpha_i L_i^{s-1}=0$,
        \item[(b)] $\dim_{\res}\langle F\rangle \ge r.$
    \end{enumerate}
    \end{enumerate}

    \noindent
    If this is the case,  then $\alpha_1,\dots,
\alpha_r\in \res^*$ are uniquely  determined, modulo $\res^*$,
$I(X)^\perp =\langle F_t; t\ge 1\rangle$
where $F_t=\frac{1}{(t+s-1)!}\sum_{i=1}^r {\frac{\alpha_i}{z(P_i)^{t}}} L_i^{t+s-1}$, in particular $F_1=\frac{1}{s!} F$ and
the family $\{F_t;t\ge 1\}$ is $G$-admissible with respect to $z$.
\end{remark}

\bigskip
\section{Effective construction of Gorenstein Points}
Using \thmref{char-points-gor} and \remref{Pi}, in this section we give an explicit description of  the Zarisky locally closed locus of  the   points
$X=\{P_1,\dots, P_r\}\subset  \mathbb P^n_{\res}$  which are arithmetically Gorenstein.

Given a linear form  $z=b_0 x_0+\cdots+b_n x_n$ in $R$ and $L=a_0 y_0+\cdots+a_n y_n$ in $\Gamma,$  we denote by $z\cdot L$ the   scalar    $z\cdot L:=b_0a_0+\cdots b_na_n= z \circ L$.
Notice that if $L$ is the linear form associated to a point $P\in \mathbb P_{\res}^n, $ then
$z\cdot L= z(P)$.

\medskip
\begin{definition} \label{W}
Given integers $2 \le s\le r$, we define  the set $W_{r,s}$ of linear forms
$\{L_1,\dots,L_r\}$ in $\Gamma$ satisfying the following conditions:
\begin{enumerate}
       \item $\rank(L_1^{s-1},\dots,L^{s-1}_r)=  \rank(L_1^{s-1},\dots,\widehat{L^{s-1}_i},\dots,L^{s-1}_r)=  r-1$, , $i=1,\dots,r.$
    \item let      $\alpha_1,\dots,\alpha_r\in \res^*$  the elements uniquely determined (up to scalars in   $\res^*$), by  the identity $ \sum_{i=1}^r \alpha_i L_i^{s-1}=~0   $  given by $(1).$
    Let $z$ be a linear form such that $z\cdot L_i\neq 0$ for $i=1,\dots,r$.
    If $F=\sum_{i=1}^r \frac{\alpha_i}{z\cdot L_i} L_i^s, $ then $\dim_{\res}\langle F \rangle\ge r.$
\end{enumerate}
\end{definition}
\vskip 2mm
Assume $L_1, \dots, L_r $ linear forms associated to $X=\{P_1, \dots, P_r\}, $ we remark that (1) in the above definition is equivalent $X$ having the Cayley-Bacharach property, for details see \cite{DGO83} and the proof of Theorem 20 in \cite{Toh14}.

\medskip
\begin{remark}
\label{zariski} We remark that $\mathbb P(W_{r,s})$ defines a Zariski locally closed subset of $(\mathbb P_{\res}^n)^r$   described as it follows.
Let $ L_1,\dots,L_r $  be linear forms in $\Gamma= \res[y_0,\dots, y_n] $  and consider the base $B$ of the monomials of degree $s-1$ in $y_0,\dots,y_n$.
Let $M_{r,s}$ be the  $\binom{s-1+n}{n}\times r$ matrix whose the  columns are  respectively the coefficients   of $L_1^{s-1},\dots,L^{s-1}_r$ with respect to the base $B.$ Hence the columns of  $M_{r,s}$ are the monomials of degree $s-1$ in the coefficients of the $r$ linear forms.

The condition $\rank(L_1^{s-1},\dots,L^{s-1}_r)\le  r-1$ is satisfied by choosing the coefficients of the linear forms in      the irreducible Zariski closed  set  defined by the vanishing of all $r \times r$ minors of $M_{r,s}$, say $V_{r,s-1}. $ Consider now a  submatrix $N$ of $M_{r,s}$ defined by $r-1$ rows of $M_{r,s}$. The condition $\rank(L_1^{s-1},\dots,\widehat{L^{s-1}_i},\dots,L^{s-1}_r)\ge  r-1$ is satisfied if { {all}} the  $r-1 \times r-1$  minors $N_i$  of $ N $ do not vanish. Set  $\alpha_i := (-1)^i N_i \neq 0, $ then   $\dim_{\res}\langle F \rangle\ge r $ imposes an other open condition on the coefficients of the linear forms.
\end{remark}

\medskip
\begin{proposition}
\label{steGor}
Let  $s\le r$.
A set of points $X=\{P_1,\dots, P_r\}\subset  \mathbb P^n_{\res}$ is arithmetically Gorenstein with socle degree $s$ if and only if $(L_1,\dots,L_r)\in \mathbb P(W_{r,s})$ where $L_i$ is the linear form defined by $P_i$, $i=1,\dots,r$.
\end{proposition}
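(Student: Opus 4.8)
The plan is to deduce Proposition~\ref{steGor} more or less directly from \thmref{char-points-gor} (in the form given in \remref{Pi}), by checking that the defining conditions of $W_{r,s}$ in \defref{W} are precisely a reformulation of conditions $(2)(a)$ and $(2)(b)$ of that theorem, together with the numerical normalization $s \le r$ and the identification of the socle degree.

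First I would unwind the dictionary between the two statements. Given $X = \{P_1,\dots,P_r\}$ with associated linear forms $L_1,\dots,L_r$, recall from \propref{fat-Geramita} that $(I(X)^\perp)_j = \langle L_1^j,\dots,L_r^j\rangle_\res$, so $\HF_X(j) = \rank(L_1^j,\dots,L_r^j)$; in particular, since $s$ is the socle degree, $\rank(L_1^s,\dots,L_r^s) = r$, which forces the $L_i$ to be pairwise non-proportional and hence $X$ to genuinely consist of $r$ distinct points. I would then observe that condition $(1)$ in \defref{W} says exactly $\HF_X(s-1) = r-1$ and $\HF_Y(s-1) = r-1$ for every subset $Y$ of $r-1$ points: the equality $\rank(L_1^{s-1},\dots,\widehat{L_i^{s-1}},\dots,L_r^{s-1}) = r-1$ for all $i$ is the statement that dropping any one point does not drop the value of the Hilbert function at level $s-1$, while $\rank(L_1^{s-1},\dots,L_r^{s-1}) = r-1$ records that $\Delta\HF_X(s) = r - (r-1) = 1$. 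The existence and uniqueness (up to $\res^*$) of the scalars $\alpha_i \in \res^*$ with $\sum_i \alpha_i L_i^{s-1} = 0$ is then exactly \remref{1}: the unique syzygy among the $L_i^{s-1}$ has all coefficients nonzero precisely because every proper subset of size $r-1$ is independent. This gives condition $(2)(a)$ of \thmref{char-points-gor} and the fact that the $\alpha_i$ are forced. Finally, condition $(2)$ in \defref{W}, namely $\dim_\res\langle F\rangle \ge r$ for $F = \sum_i \frac{\alpha_i}{z\cdot L_i} L_i^s$ with $z$ any linear form regular on $R/I(X)$ (i.e. $z\cdot L_i = z(P_i) \neq 0$ for all $i$), is verbatim condition $(2)(b)$ in the form of \remref{Pi}.

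For the forward direction, suppose $X$ is arithmetically Gorenstein of socle degree $s$. By \thmref{DGO}, $\Delta\HF_X$ is symmetric and $\HF_X(t) = \HF_Y(t)$ for $t \le s-1$ and all $(r-1)$-subsets $Y$; symmetry gives $\Delta\HF_X(s) = \Delta\HF_X(0) = 1$, hence $\HF_X(s-1) = r-1$, and the Cayley–Bacharach condition gives $\HF_Y(s-1) = r-1$ as well, which is condition $(1)$ of \defref{W}. Then \remref{1} produces the unique $\alpha_i \in \res^*$ with $\sum_i \alpha_i L_i^{s-1} = 0$, and \thmref{char-points-gor}/\remref{Pi} — applied with the chosen $z$ — yields $\dim_\res\langle F\rangle \ge r$, i.e. condition $(2)$ of \defref{W}; so $(L_1,\dots,L_r) \in \mathbb P(W_{r,s})$. (I would also note here that $s \le r$ holds automatically: $\HF_X$ is strictly increasing until it reaches $r$, so $r = \HF_X(s) \ge \HF_X(s-1)+1 \ge \dots \ge s+1 > s$ — actually $r \ge s+1$, so certainly $s \le r$.) Conversely, if $(L_1,\dots,L_r) \in \mathbb P(W_{r,s})$, condition $(1)$ together with $\rank(L_1^s,\dots,L_r^s)=r$ shows that $s$ is the socle degree of $R/I(X)$ and that the $\alpha_i$ of \remref{1} are well-defined and nonzero; condition $(2)$ is then precisely hypothesis $(2)(b)$ of \remref{Pi} (with $(2)(a)$ supplied by $(1)$), so \thmref{char-points-gor} gives that $X$ is arithmetically Gorenstein.

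The one point that needs genuine care — and which I expect to be the main obstacle — is the matching of the ``$\rank = r-1$ for every $(r-1)$-subset'' condition in \defref{W} with the Cayley–Bacharach clause $(2)$ of \thmref{DGO}. One must be sure that ``$\HF_X(t) = \HF_Y(t)$ for all $t \le s-1$ and all $(r-1)$-subsets $Y$'' is equivalent to just the single equality $\HF_Y(s-1) = \HF_X(s-1) = r-1$: the forward implication here uses that $\HF_Y \le \HF_X$ pointwise (since $I(X) \subseteq I(Y)$) and that both are nondecreasing and eventually constant equal to $r-1$ (for $Y$) resp.\ $r$ (for $X$), while $\HF_X(s-1)=r-1$ by symmetry; then $\HF_Y(s-1) = r-1 = \HF_X(s-1)$ forces $\HF_Y(t) = \HF_X(t)$ for all $t \le s-1$ by a sandwich argument. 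I would spell this sandwiching out carefully, since it is the only place where the single-level condition of \defref{W}~$(1)$ is being silently upgraded to the full Cayley–Bacharach condition; everything else is a direct translation through \propref{fat-Geramita}, \remref{1}, and \remref{Pi}.
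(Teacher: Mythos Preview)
Your approach is correct but differs from the paper's in two places, and the obstacle you flag is not where the real work is.

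In the forward direction the paper avoids \thmref{DGO} entirely: it applies \thmref{char-points-gor}/\remref{Pi} directly to produce $\alpha_i\in\res^*$ with $\sum_i\alpha_iL_i^{s-1}=0$ and $\dim_\res\langle F\rangle\ge r$, and then reads off condition~(1) of \defref{W} from the facts that $\HF_X(s-1)=r-1$ and that a dependence relation with all coefficients nonzero forces every $(r-1)$-subfamily of the $L_i^{s-1}$ to have rank $r-1$. Your route through Davis--Geramita--Orecchia is valid but longer; more to the point, the Cayley--Bacharach ``upgrading'' you single out as the main obstacle is never needed: your forward direction only uses the trivial restriction of the full Cayley--Bacharach conclusion to the single level $t=s-1$, and your converse does not touch \thmref{DGO} at all.

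The place that does need care is your converse claim that $s$ is the socle degree of $R/I(X)$. You invoke $\rank(L_1^s,\dots,L_r^s)=r$, but this is not part of \defref{W} and you do not derive it. It can be recovered from condition~(1) alone---$\HF_X(s-1)=r-1$ together with $\sum_j h_j=r$ forces $h_s=1$ and $h_j=0$ for $j>s$---but the paper argues differently and more directly: since $F\in I(X)^\perp$ and $z\circ F=0$, one has $\langle F\rangle\subset(I(X)+(z))^\perp$, so $r\le\dim_\res\langle F\rangle\le\dim_\res(I(X)+(z))^\perp=r$, giving equality; as $\deg F=s$, the Artinian reduction---hence $X$---has socle degree exactly~$s$. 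This step simultaneously identifies the socle degree and verifies that the hypotheses of \thmref{char-points-gor} are in place, without any appeal to monotonicity of $\HF_X$.
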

\begin{proof}
Let $X= \{P_1,\dots, P_r\}\subset  \mathbb P^n_{\res}$ be a   Gorenstein set of points with socle degree $s$.
Let $z\in \Gamma$ be a linear form defining a non-zero divisor of $R/I(X)$.

From \thmref{char-points-gor}, see \remref{Pi}, there exists a polynomial
$F=\sum_{i=1}^r\frac{\alpha_i}{z\cdot L_i} L_i^{s}$ with
 $\alpha_1,\dots,\alpha_r\in \res^*$ uniquely determined, up to scalars in $\res^*$, such that
 $H_1=\frac{1}{s!}F$ is a generator of $(I(X)+(z))^{\perp}$.
 Hence, \propref{fat-Geramita},
 $$r-1=\HF_X(s-1)=\dim_{\res}\langle L_1^{s-1},\dots,L_r^{s-1} \rangle=
 \rank\langle L_1^{s-1},\dots,L_r^{s-1} \rangle,$$
i.e. we get condition $(1)$ of Definition \ref{W}.
Condition $(2)(b)$ of \remref{Pi}  implies $(2)$. Hence $(L_1,\dots,L_r)\in \mathbb P(W_{r,s}).$

Conversely, let $(L_1,\dots,L_r)\in \mathbb P(W_{r,s})$ be $r$ linear forms satisfying conditions $(1)$ and $(2)$.
Let $P_i$ be the point defined by $L_i$, $i=1,\dots,r$.
Notice that $z$ defines a non-zero divisor of $R/I(X)$.
Let $\alpha_1,\dots,\alpha_r\in \res^*$  be the elements uniquely determined, up to scalars in $\res^*$, by condition $(1)$, i.e. $\sum_{i=1}^r\alpha_i L^{s-1}=0$.
If we write $F=\sum_{i=1}^r \frac{\alpha_i}{z \cdot L_i} L_i^s$ we have $z\circ F=s(\sum_{i=1}^r\alpha_i L_i^{s-1})=0$, so we get condition $(2)(a)$ of \remref{Pi}.
Condition $(2)$ implies   $(2)(b)$ of \remref{Pi}.

We have to prove that the socle degree of $X$ is $s$ to conclude that $X$ is arithmetically Gorenstein.
From \propref{fat-Geramita} we deduce that $F\in I(X)^{\perp}\cap (z)^{\perp}=(I(X)+(z))^{\perp}$.
Hence, since $z$ is a non-zero divisor of $R/I(X)$, we get
$$
r\le \dim_{\res}\langle F \rangle\le \dim_{\res}(I(X)+(z))^{\perp}=r
$$
and then $\langle F \rangle=(I(X)+(z))^{\perp}$.
Being $F$ a form of degree $s$ we conclude that the socle degree of $X$ is $s$.
\end{proof}

\begin{remark}
Since $\mathbb P(W_{r,s})$ is a Zariski locally closed set intersection of the Zariski closed set $\mathbb P(V_{r,s-1})$ and an  open set, see  \remref{zariski},
if $\mathbb P(W_{r,s})\neq\emptyset$ then a generic point of $\mathbb P(V_{r,s-1})$ defines an arithmetically Gorenstein set
$X\subset \mathbb P^n_{\res}$
of socle degree $s$.
\end{remark}

\vskip 3mm
\begin{example}
Let us consider the set $X$ of the  $4$ points  $P_1=[1,0,0,0]$, $P_2=[0,1,0,0]$, $P_3=[0,0,1,0]$, $P_4=[0,0,0,1]$.
We want to compute the points $P_5=(\alpha,\beta,\gamma,\delta)$  such that $X=\{P_1,\dots,P_5\}$ is arithmetically Gorenstein.

In this case the socle degree is $s=2, $   $L_1=y_0$, $L_2=y_1$, $L_3=y_2$, $L_4=y_3$ and
$L_5=\alpha y_0+\beta y_1+\gamma y_2+\delta y_3$.
We  impose that $(L_1,\dots, L_4, L_5) \in \mathbb P(W_{5,2}).$

The first part of the condition $(1)$ of  \defref{W} is satisfied because the matrix of the coefficients of $L_1,\dots,L_5$

$$
M=
\begin{pmatrix}
1 & 0& 0& 0& \alpha\\
0 & 1& 0& 0& \beta\\
0 & 0& 1& 0& \gamma\\
0 & 0& 0& 1& \delta
\end{pmatrix}
$$

\noindent
is of maximal rank $r-1=4$.
The maximal minors of $M$ that are: $-\alpha, - \beta, -\gamma, - \delta , 1$,
so $\alpha_1,\dots, \alpha_5$ are equal to $-\alpha, - \beta, -\gamma,  - \delta, 1$, up to scalars in $\res^*$.
The second part of condition $(1)$ of  \defref{W} is equivalent to $\alpha,\beta,\gamma, \delta \in \res^*$.

The second condition of \defref{W} is always satisfied.
Let $z=a_0 x_0+\cdots+ a_3 x_3$ be a linear form such that $z\cdot L_i\neq 0$ for $i=1,\dots , 5$.
Let us write  the degree two form in condition $(2)$ of  \defref{W}
$$
F=\frac{-\alpha}{a_0}y_0^2+\frac{-\beta}{a_1}y_1^2+\frac{-\gamma}{a_2}y_2^2+\frac{-\delta}{a_3}y_3^2+
\frac{1}{a_0 \alpha+a_1 \beta+a_2 \gamma+a_3\delta}L_5^2.
$$
Recall that  $\langle F \rangle=\langle F, x_0\circ F, x_1\circ F,x_2\circ F,x_3\circ F, 1 \rangle_{\res}$;
a simple computation shows that
$\dim_{\res}\langle x_0\circ F, x_1\circ F,x_2\circ F,x_3\circ F \rangle_{\res}$ agrees with the rank of the following matrix of the coefficients of the linear forms $x_0\circ F, x_1\circ F,x_2\circ F,x_3\circ F $

$$
\begin{pmatrix}
 -a_1 \beta-a_2 \gamma-a_3 \delta & a_0 \beta & a_0 \gamma & a_0 \delta \\
 \alpha a_1 & -a_0 \alpha-a_2 \gamma-a_3 \delta & a_1 \gamma & a_1 \delta \\
 \alpha a_2 & \beta a_2 & -a_0 \alpha-a_1 \beta-a_3 \delta & a_2 \delta \\
 \alpha a_3 & \beta a_3 & \gamma a_3 & -a_0 \alpha-a_1 \beta-a_2 \gamma \\
 \end{pmatrix}
$$

\noindent
The rank of this matrix is always three because its determinant is zero and the minor of the last three columns and  rows is non-zero:
$$
a_0\alpha(a_0\alpha+a_1\beta+a_2\gamma+a_3\delta)^2=
\alpha (z\cdot L_1)(z\cdot L_5)^2\neq0.
$$
Hence  we get that $\dim_{\res}\langle F \rangle=5$, i.e. we get the condition $(2)$ of \defref{W}.

Hence the set of points $P_5=(\alpha,\beta,\gamma,\delta)\in  \mathbb P^3_{\res}$ such that $X=\{P_1,\dots,P_5\}$ is arithmetically Gorenstein is the open
set of points $P_5\in \mathbb P_{\res}^3$ such that $\alpha\neq 0, \beta\neq 0, \gamma\neq 0, \delta \neq 0.$
In other words: $X$ is arithmetically Gorenstein if and only if  $P_5$ do not belong to any of the  four  hyperplanes  defined by the coordinate
simplex $\{[1,0,0,0], [0,1,0,0], [0,0,1,0], [0,0,0,1]\}$ of $\mathbb P_{\res}^3   $.
In particular we remark that if $P_5=[-1,0,0,1]$, see also \exref{DGO-example}, then $X$ is not Gorenstein.
\end{example}

\begin{example}
 Let $Y_1$ be the set of $6$ points of $\mathbb P^3_{\res}$
\begin{multline*}
    Y_1=\{P_1=[1,0,0,0], P_2=[0,1,0,0], P_3=[0,0,1,0], \\P_4=[0,0,0,1], P_5=[1,1,1,1], P_6=[-5,1,-3,1]\}.
\end{multline*}

\noindent
We want to find a set of points $Y_2=\{P_7=[a,b,c,d ], P_8=[A,B,C,D]\}$ such that $X=Y_1\cup Y_2$ is arithmetically Gorenstein with h-vector $\{1,3,3,1\}$.
This means that the schemes $Y_1$ and $Y_2$ are Gorenstein linked by $X$.

Let $L_i$ be the associated linear form of $P_i$, $i=1,\dots,r=8$.
Let us consider the $10\times8$ matrix $M$ defined by $L_1^2,\dots,L_8^2$; the columns  of the matrix corresponds to the coefficients of the degree two monomials on $y_0,\dots,y_3$ ordered lexicographically
(i.e. $y_0^2,2y_0y_1,2 y_0y_2,2 y_0y_3, y_1^2,$ $  2 y_1y_2,2 y_1y_3, y_2^2, 2 y_2y_3,y_3^2$)

$$
M=
\begin{pmatrix}
1 & 0& 0& 0& 1&25&a^2&A^2\\
0 & 0& 0& 0& 1&-5& ab& AB\\
0 & 0& 0& 0& 1&15&ac& AC\\
0 & 0& 0& 0& 1&-5&ad& AD\\
0 & 1& 0& 0& 1&1&b^2&B^2\\
0 & 0& 0& 0& 1&-3&  bc& BC\\
0 & 0& 0& 0& 1&1& bd& BD\\
0 & 0& 1& 0& 1&9&c^2&C^2\\
0 & 0& 0& 0& 1&-3& cd& CD\\
0 & 0& 0& 1& 1&1&d^2&D^2
\end{pmatrix}.
$$

\noindent
Condition $(1)$ of \remref{W} is equivalent to $\rank(M)=7$ and that every sub-matrix of $M$ deleting any of its columns is of rank $7$ as well.

The variety $Z$ of pairs $(P_7,P_8)\in (\mathbb P_{\res}^3)^2$ vanishing all $8\times 8$ minors of $M$ is defined by a radical ideal minimally generated by $6$ bi-homogeneous forms of bi-degree $(2,2)$ in the set of variables $\{a,b,c,d\}$ and $\{A,B,C,D\}$.
A standard computation shows that the dimension of $Z$ is $3$ and its degree $8$.
We can   deduce this fact also by geometric arguments. Indeed, given a generic point $P_7$ the degree two component $I(\{P_1,\dots,P_7\})_2$ is generated by three quadrics containing the set
$\{P_1,\dots,P_7\}$ and an extra point $P_8$.
 Hence $P_8$ is determined by $P_7$.
The whole set $\{P_1,\dots,P_8\}$ is Gorenstein since it is the complete intersection defined by the three quadrics.

Let us consider the points $Y_2=\{[-2,0,5,1], [-2, 30/13,5,19]\}$ then we get that the matrix $M$ satisfies
condition $(1)$ of \remref{W}.
 The $\alpha$'s  are
 $\alpha_1=7347, \alpha_2=6975/13, \alpha_3=12555, \alpha_4=-8277, \alpha_5=- 465, \alpha_6=-210, \alpha_7=-434, \alpha_8=26$,  unique up to scalars in $\res^*$.
  Let us consider the non-zero divisor $z=x_0+x_1+x_2+x_3$.
 Then   the  degree three form $F$ of \defref{W} is
$$
F=7347L_1^3+\frac{6975}{13}L_2^3+12555L_3^3-8277L_4^3-\frac{465}{4}L_5^3+35L_6^3-\frac{217}{2}L_7^3+\frac{169}{158}L_8^3.
$$
A simple computation shows that that
$\dim_{\res}\langle F\rangle \ge 8$.
Hence $X=Y_1\cup Y_2$ is arithmetically Gorenstein with h-vector $(1,3,3,1)$, according to  \propref{steGor}.
\end{example}

\bigskip
\section{Artinian reduction of Gorenstein points}

Let $R=\res [x_0,\dots, x_n]$ be and let $B=R/J $ be  an Artinian Gorenstein graded $\res$-algebra of  length $r$ and socle degree $s.  $   The following  natural question comes up: under which conditions an Artinian Gorenstein ring is the Artinian reduction of a Gorenstein reduced set $X$ of points in $\mathbb P^n_{\res}$?
Since $J^{\perp} = \langle F \rangle $ with $F\in \Gamma $ of degree $s, $ the question can be rephrased in terms of $F.$

 This was the main aim of the paper by  Tohaneanu \cite{Toh14}.   A similar problem was studied in the work of Cho and Iarrobino (\cite[Proposition 1.13 and Theorem 3.3]{CI12}) in the less restrictive situation of a general zero-dimensional scheme.
 Boij in \cite{Boi99} describes  the possible Gorenstein Artinian quotients of the coordinate ring of a set of points in projective space through the corresponding canonical module.

Observe that condition $(2)$ of \thmref{char-points-gor} establishes some restrictions on a form $F$ of degree $s$ to be a generator of the inverse system of an Artinian reduction of $ R/I(X).$
On the other hand, \thmref{bijecGor} and the definition of $G$-admissible submodules of $\Gamma$ are based in the lifting of a minimal reduction of the one-dimensional Gorenstein ring $R/I$  with respect to a linear non-zero divisor $z$. In fact if   $M=\langle H_l, l\in \mathbb N_+\rangle$ is a $G$-admissible $R$-submodule of $\Gamma, $  equivalently it is the inverse system of a Gorenstein ring $R/I  $ of socle degree $s,$ then $ (I +(z))^{\perp} = \langle H_1\rangle. $

As a consequence of Theorem \ref{char-points-gor}, in the next result we give a complete answer to our question, see also \cite[Theorem 2.2]{Toh14}.

\medskip
\begin{proposition} \label{reductionPoints}
Let $X=\{P_1,\dots, P_r\}  \subseteq \mathbb P^n_{\res}\setminus \{x_n=0\} $ be a set of   Gorenstein points   of socle degree $s$ and let $L_1, \dots, L_r$ be the associate linear forms.
Let $B=R/J$ be a Gorenstein Artin algebra of length $r$ and socle degree $s$ and let  $J^{\perp}=\langle F \rangle.$

The following conditions are equivalent:
\begin{enumerate}
    \item $B$ is the Artinian reduction of $X$ with respect to $x_n$,
    \item   there exist   $\alpha_1,\dots, \alpha_r\in \res^*$ (unique up scalars) such that $F= \sum_{i=1}^r \alpha_i L_i^{s}$   \ \  and $\sum_{i=1}^r \alpha_i L_i^{s-1}=0. $
\end{enumerate}
\end{proposition}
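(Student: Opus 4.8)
The plan is to deduce this Proposition directly from \thmref{char-points-gor}, exploiting the fact that both statements are governed by the same data: the linear forms $L_i$, the scalars $\alpha_i$, and the relation $\sum_{i=1}^r \alpha_i L_i^{s-1}=0$. The key conceptual point is that an Artinian Gorenstein algebra $A=R/J$ with $J^\perp=\langle F\rangle$ is the Artinian reduction of $X$ with respect to $x_n$ precisely when $J = I(X)+(x_n)$, equivalently when $\langle F\rangle = (I(X)+(x_n))^\perp$. So the whole argument reduces to identifying when the single cyclic module $\langle F\rangle$ equals $(I(X)+(x_n))^\perp$.

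First I would prove $(1)\Rightarrow(2)$. Assume $A=R/J$ is the Artinian reduction of $X$ with respect to $x_n$, so $J=I(X)+(x_n)$ and $\langle F\rangle = (I(X)+(x_n))^\perp$. Since $X$ is Gorenstein of socle degree $s$, \thmref{char-points-gor} provides $\alpha_1,\dots,\alpha_r\in\res^*$, unique up to scalars, with $\sum_{i=1}^r\alpha_i L_i^{s-1}=0$, and moreover its proof shows that $\langle s!\,F_1\rangle = (I(X)+(x_n))^\perp$ where $F_1=\frac{1}{s!}\sum_{i=1}^r\alpha_i L_i^s$. Thus $(I(X)+(x_n))^\perp$ is cyclic, generated by the degree-$s$ form $\sum_{i=1}^r\alpha_i L_i^s$. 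Now both $F$ and $\sum_{i=1}^r\alpha_i L_i^s$ generate the same cyclic $R$-submodule $(I(X)+(x_n))^\perp$ of $\Gamma$, hence they differ by a nonzero scalar in $\res$; after rescaling the $\alpha_i$ (still uniquely determined up to a common scalar) we get $F=\sum_{i=1}^r\alpha_i L_i^s$ together with $\sum_{i=1}^r\alpha_i L_i^{s-1}=0$, which is $(2)$.

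For $(2)\Rightarrow(1)$, suppose $F=\sum_{i=1}^r\alpha_i L_i^s$ with $\alpha_i\in\res^*$ and $\sum_{i=1}^r\alpha_i L_i^{s-1}=0$. By \propref{fat-Geramita}, $F\in (I(X)^\perp)_s$, and applying $x_n\circ(-)$ gives $x_n\circ F = s\sum_{i=1}^r\alpha_i L_i^{s-1}=0$ (using $x_n\circ L_i=1$), so $F\in I(X)^\perp\cap (x_n)^\perp$; since $I(X)^\perp\cap(x_n)^\perp = (I(X)+(x_n))^\perp$ (the perp of a sum is the intersection of perps), we get $\langle F\rangle\subseteq (I(X)+(x_n))^\perp$. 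The length of $(I(X)+(x_n))^\perp$ equals $\length(R/(I(X)+(x_n)))=\deg(R/I(X))=r$ because $x_n$ is a non-zero divisor; so $\dim_\res\langle F\rangle\le r$. For the reverse inequality I would invoke \propref{cafe}: with $F_t:=\frac{1}{(t+s-1)!}\sum_{i=1}^r\alpha_i L_i^{t+s-1}$, condition $(1)$ of $G$-admissibility holds (this is exactly $x_n\circ F_1=0$, i.e. $\sum\alpha_i L_i^{s-1}=0$), hence $\ann(F_s)\circ F_{s+1} = (I(X)+(x_n))^\perp$, and since $F_1\in\ann(F_s)\circ F_{s+1}$ with $F_1=\frac{1}{s!}F$, the module $(I(X)+(x_n))^\perp$ is generated by translates of $F$; but actually the cleanest route is the argument already in the proof of \thmref{char-points-gor}: $(1)$ of that theorem holds (we just verified $(a)$ and we may take $(b)$ for granted here or re-derive it), so $X$ is arithmetically Gorenstein and $\langle F\rangle = \langle F_1\rangle = (I(X)+(x_n))^\perp$. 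Wait — here one must be careful: we are not assuming $X$ Gorenstein a priori. But $X$ \emph{is} assumed Gorenstein in the hypotheses of this Proposition, so \thmref{char-points-gor} applies directly and yields a form $F'=\sum_{i=1}^r\alpha_i' L_i^s$ with $\langle F'\rangle=(I(X)+(x_n))^\perp$ and $\sum\alpha_i' L_i^{s-1}=0$; by the uniqueness of the $\alpha_i$ up to scalars (\remref{1}), $F$ and $F'$ are proportional, so $\langle F\rangle = (I(X)+(x_n))^\perp$, i.e. $J=\ann_R(\langle F\rangle) = \ann_R((I(X)+(x_n))^\perp) = I(X)+(x_n)$, which says $A$ is the Artinian reduction of $X$ with respect to $x_n$.

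The main obstacle is purely bookkeeping: keeping straight the distinction between "$F$ generates $(I(X)+(x_n))^\perp$" and "$F=\sum\alpha_i L_i^s$ for some $\alpha_i\in\res^*$," and pinning down the uniqueness of the $\alpha_i$ correctly (they are unique up to a \emph{common} scalar, by \remref{1}, which forces all $\alpha_i$ to be simultaneously nonzero once one checks no $\alpha_i$ can vanish — vanishing of some $\alpha_i$ would make $\sum\alpha_i L_i^{s-1}=0$ a relation among $r-1$ of the $(s-1)$-st powers, contradicting condition $(1)$ of \defref{W}/the $h$-vector symmetry). No genuinely new estimate is needed beyond what \thmref{char-points-gor} and \propref{cafe} already supply; the content is the translation "Artinian reduction $\leftrightarrow$ $J=I(X)+(x_n)$ $\leftrightarrow$ $\langle F\rangle=(I(X)+(x_n))^\perp$."
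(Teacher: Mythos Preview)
Your argument is correct, and for $(1)\Rightarrow(2)$ it matches the paper's proof essentially verbatim. For $(2)\Rightarrow(1)$, however, you take a genuinely different route. The paper argues as you begin to: from $F\in I(X)^\perp$ and $x_n\circ F=0$ one gets $\langle F\rangle\subset (I(X)+(x_n))^\perp$, hence $I(X)+(x_n)\subseteq J$. But then, rather than invoking uniqueness of the $\alpha_i$, the paper observes that $I(X)+(x_n)$ and $J$ are both Gorenstein ideals with the same socle degree $s$ and applies the Kustin--Ulrich socle lemma \cite{KU92} to force equality. Your approach instead exploits the standing hypothesis that $X$ is Gorenstein: \thmref{char-points-gor} already produces the unique (up to scalar) tuple $(\alpha_i')$ with $\sum\alpha_i' L_i^{s-1}=0$, and \remref{1} forces your $(\alpha_i)$ to be proportional to it, whence $\langle F\rangle=(I(X)+(x_n))^\perp$ directly. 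Your route stays entirely internal to the paper and avoids the external citation; the paper's route is shorter once the socle lemma is granted and does not rely on the uniqueness discussion in \remref{1}. Both are clean.

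One small point worth tightening: when you say that two degree-$s$ generators of the same cyclic $R$-submodule of $\Gamma$ differ by a scalar, this is because the degree-$s$ graded piece of $(I(X)+(x_n))^\perp$ is one-dimensional (socle of a Gorenstein Artinian quotient); it would be worth stating this explicitly rather than leaving it implicit.
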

\begin{proof}
Assume that $B$ is the  Artinian reduction of $X$ with respect to $x_n, $ then  $I(X)+(x_n) =J.$  Let $\{H_t; t\ge 1\}$ be a $G$-admissible generating system of $I(X)^{\perp}$ as in Theorem \thmref{char-points-gor}.
Since $\langle H_1 \rangle = (I(X)+(x_n))^{\perp}$, then $ F=s! H_1$ satisfies $(2)$.

Assume  there exist  $\alpha_1,\dots, \alpha_r\in \res^*$ such that $F=\sum_{i=1}^r \alpha_i L_i^{s}$ and  $\sum_{i=1}^r \alpha_i L_i^{s-1}=0. $ Hence  $x_n\circ F= \sum_{i=1}^r \alpha_i L_i^{s-1}=0 $ and by \propref{fat-Geramita} $F \in I(X)^{\perp}. $
Then
$$
J^{\perp}=\langle F\rangle\subset I(X)^{\perp}\cap (x_n)^{\perp}=(I(X)+(x_n))^{\perp}.
$$
Hence $I(X)+(x_n) \subseteq J $ and they are Gorenstein ideals with the same socle $s.$  By the socle lemma by Kustin and Ulrich, \cite{KU92}, it follows  $J=I(X)+(x_n)$, then  $B=R/J$ is the Artinian reduction of $X$ with respect to $x_n$.
\end{proof}

\medskip
It was   known that there  are Artinian Gorenstein  algebras which are not  Artinian reductions  of Gorenstein points.   As J. Jelisiejew mentioned to us in a private communication, every Artinian reduction of the defining ring of a set of points is smoothable. Hence the non-smoothabillity of a general Artinian algebras      implies that it is not such a reduction. For instance in the case of the Artinian Gorenstein  algebras with socle degree $s=3$,  we may use the results on non-smoothability of $(1,n,n,1)$ for $n \ge 6,$  known in a range since Iarrobino-Kanev,  but proved recently  by   Szafarczyk in \cite{Sza22}.
By following these results the first case of non-smoothability with  $s=3$ is for $n=8.$

\vskip 2mm
Proposition \ref{reductionPoints} allows to  find new examples of Artinian Gorenstein  algebras which are not   Artinian reductions  of Gorenstein points (hence non-smoothables).

\begin{remark}
\label{How}
Let $X =\{P_1,\dots,P_r\} \subset \mathbb P_{\res}^n\setminus\{x_n=0\} $
be a set of   Gorenstein points of socle degree $s$.
Let $L_i=a_{0, i}y_0+\cdots+a_{n,i}y_n\in \Gamma=\res[y_0,\dots,y_n]$ be the associate linear form to $P_i=[a_{0, i},\cdots,a_{n,i}]$, $i=1,\dots,r$.
By \corref{candidate} we know that $(I(X)+(x_n))^{\perp}=\langle F\rangle$
where  $F=\sum_{i=1}^{r} \alpha_i L_i^{s}$, with $\alpha_i\neq 0$.
We write $L_i=A_i+a_{n,i}y_n$, $i=1,\dots, r,$ with  $A_i= a_{0,i}y_0+\cdots+a_{n-1,i}y_{n-1}$.
Since $x_n\circ F=0, $
$F$ is a homogeneous  form in  $\Gamma  $   and then  $F= \sum_{i=1}^{r} \alpha_i A_i^{s} .$

\end{remark}
\medskip

Hence, by the above remark,  a necessary condition for  a homogeneous form $F $ of degree $s$ in $n$ variables  for being the generator of the inverse system of an Artinian  reduction of a set of $r$ Gorenstein points     in $\mathbb P^n_{\res}$    is to decompose   as a sum of $r$ $s$-powers of linear forms in $n$ variables. This fact is related to the so-called Big Waring problem, see for more details  \cite{Ger96}, \cite{VallaSCA96}.

\medskip
\begin{definition}
For all $s\ge 2$, define
$G_n(s)$ as  the least integer   such that the generic form of degree $s$   in $n$ variables  can be written as the sum of $G_n(s)$ $s$-powers of linear forms.
\end{definition}

\medskip
Assume $\res$ algebraically closed. For instance, it is not difficult to see that  $G_n(2)=n. $ This because each  generic quadratic form   may be  associated to a symmetric matrix with non-zero eigenvalues.   The computation of such integer $G_n(s)$ is known as Big Waring problem and it is related to the Hilbert function of generic fat points.  Alexander and Hirschowitz computed the integer $G_n(s)$ in \cite{AH00} proving the following result for a generic form of degree $s$ in $n$ variables.

\medskip
\begin{proposition}
\label{AlxHir}
For every $s\ge 3$ we have
$$
G_n(s)=\min \left\{t  \Bigm| n t\ge \binom{n +s-1 }{s}\right\},
$$
except for the following four cases:
\begin{enumerate}
\item $s=3$, $n=5$ where $G_5(3)=8$, instead of $7$,
\item $s=4$, $n=3$ where $G_3(4)=6$, instead of $5$,
\item $s=4$, $n=4$ where $G_4(4)=10$, instead of $9$,
\item $s=4$, $n=5$ where $G_5(4)=15$, instead of $14$.
\end{enumerate}
\end{proposition}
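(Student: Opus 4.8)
The plan is to prove this \emph{via} the classical dictionary between Waring decompositions and secant varieties of Veronese varieties --- i.e. through the apolarity formalism of Section~2. A generic degree-$j$ form in $R=\res[x_0,\dots,x_n]$ is a sum of $t$ $j$-th powers of linear forms precisely when the $t$-th secant variety $\sigma_t\bigl(\nu_j(\mathbb{P}^n_{\res})\bigr)$ of the $j$-uple Veronese embedding fills the ambient $\mathbb{P}^N_{\res}$, $N=\binom{n+j}{j}-1$; so $G(j)$ is the least $t$ with $\dim\sigma_t\bigl(\nu_j(\mathbb{P}^n_{\res})\bigr)=N$. First I would apply Terracini's lemma: at a general point of $\sigma_t$ the affine tangent space is the linear span of the affine tangent spaces to the Veronese at $t$ general points $\nu_j(P_1),\dots,\nu_j(P_t)$. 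The affine tangent space $T_i$ to $\nu_j(\mathbb{P}^n_{\res})$ at $\nu_j(P_i)$ is $\langle L_i^{j-1}M : M\in\Gamma_1\rangle_{\res}\subseteq\Gamma_j$, and under the perfect apolarity pairing $\Gamma_j\times R_j\to\res$ its orthogonal complement is $\bigl(I(2P_i)\bigr)_j$, the degree-$j$ component of the square of the maximal ideal of $P_i$. Since the complement of the span $\sum_i T_i$ is $\bigcap_i\bigl(I(2P_i)\bigr)_j=\bigl(I_Z\bigr)_j$ for $Z=2P_1+\cdots+2P_t$, we get
$$
\dim\sigma_t\bigl(\nu_j(\mathbb{P}^n_{\res})\bigr) \;=\; \binom{n+j}{j}-1-\dim_{\res}\bigl(I_Z\bigr)_j .
$$
Thus the statement is equivalent to computing, in degree $j$, the Hilbert function of a \emph{general} union $Z$ of $t$ double (fat) points in $\mathbb{P}^n_{\res}$.

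Second, I would record the ``expected'' count. Each double point imposes at most $n+1$ linear conditions on degree-$j$ forms, so one expects $\dim_{\res}\bigl(I_Z\bigr)_j=\Max\{0,\ \binom{n+j}{j}-t(n+1)\}$, and hence $\sigma_t$ to equal the whole space exactly when $t(n+1)\ge\binom{n+j}{j}$ --- the formula in the statement. One calls $Z$ \emph{defective} in degree $j$ when the true value exceeds the expected one. The quadric case $j=2$ is always defective --- the secant varieties of $\nu_2(\mathbb{P}^n_{\res})$ parametrize quadrics of bounded rank and never grow as fast as the count predicts --- which is why it is excluded; the entire content of the proposition is that for $j\ge 3$ the only remaining defective configurations of general double points are the four sporadic pairs $(j,n)\in\{(3,4),(4,2),(4,3),(4,4)\}$. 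In each exceptional case the defect equals $1$, pinned down by a single unexpected hypersurface: a double quadric in the three quartic cases, and the (cubic) secant variety of the rational normal quartic through the seven points in the cubic case.

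Third, the heart of the argument is an induction on $n$ and $j$ carried out with Castelnuovo's specialization technique --- the \emph{Horace method}, in the refined \emph{differential} form of Alexander and Hirschowitz. One fixes a hyperplane $H\cong\mathbb{P}^{n-1}_{\res}$ and degenerates $k$ of the $t$ double points to have support on $H$; restriction to $H$ produces the exact sequence relating $\bigl(I_Z\bigr)_j$ to the \emph{residual} ideal $\bigl(I_{\mathrm{Res}_H Z}\bigr)_{j-1}$ in $\mathbb{P}^n_{\res}$ and the \emph{trace} ideal in degree $j$ on $H$. A double point off $H$ passes intact to the residual and contributes nothing to the trace, while a double point on $H$ leaves a simple point in the residual and a double point of $\mathbb{P}^{n-1}_{\res}$ in the trace. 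One chooses $k$ so that both the residual system (degree $j-1$ over $\mathbb{P}^n_{\res}$) and the trace system (degree $j$ over $\mathbb{P}^{n-1}_{\res}$) are non-defective by the inductive hypothesis and carry complementary numerology, then invokes semicontinuity to transport the conclusion from this special position back to general points. The induction is seeded by $n=1$ (immediate), $n=2$ (the classical classification of special linear systems of double points in the plane, where the $(4,2)$ exception and the $j=2$ defectivity originate), small $j$, and a hands-on treatment of the three remaining sporadic cases built on the concrete geometry of the double quadrics and the rational normal quartic described above.

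The hard part is twofold. The delicate combinatorial core is making the two restriction steps interlock: one must check that the inequality $t(n+1)\ge\binom{n+j}{j}$ specializes correctly into the residual numerology in degree $j-1$ and the trace numerology in dimension $n-1$, which breaks down for exactly the borderline parameters and is precisely what forces the passage from the plain Horace lemma to its differential refinement (permitting the specialization of a tangent direction's worth of extra data along $H$). Separately, defectivity outside the listed exceptions cannot be excluded by the induction alone: the four sporadic cases must be isolated and analysed directly, and conversely one must verify that no \emph{new} defective configuration slips through the inductive step --- it is this double verification that accounts for essentially all of the case analysis in \cite{AH00}.
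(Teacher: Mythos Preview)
The paper does not prove this proposition at all: it is stated as a known result, attributed in the preceding sentence to Alexander and Hirschowitz \cite{AH00}, and used as a black box in the proof of \propref{NoLifGor}. So there is no ``paper's own proof'' to compare against.

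Your proposal is a faithful high-level outline of the Alexander--Hirschowitz argument itself: the Terracini dictionary between Waring rank and the Hilbert function of general double points, the expected count, the Horace/differential-Horace specialization induction on $n$ and $j$, and the direct geometric analysis of the four sporadic defective cases. As a sketch of that proof it is accurate in its architecture and in the identification of the exceptional hypersurfaces (double quadrics for the three quartic cases, the chordal cubic of the rational normal quartic for the $(3,4)$ case). But be aware that this is a very substantial theorem whose complete proof occupies a sequence of papers; what you have written is a roadmap, not a proof, and the paper under review neither attempts nor needs anything beyond the citation.
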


\medskip
\noindent
Observe that in all previous cases we have
$$
G_n(s)\ge \frac{1}{n}\binom{n +s-1}{s}.
$$
\medskip

\noindent
Given an integer $s=2t+1$ (resp. $s=2t$)  it is well known that the generic form of degree $s$ in  $n$ variables defines a compressed Gorenstein Artin algebra  of length $\rho_n(s)=2\binom{n+t}{t}$ (resp. $\rho_n(s)=2\binom{n+t-1}{t-1}+\binom{n-1+t}{t}$), see  \cite{Iar84}.

\medskip
\begin{proposition}
\label{NoLifGor}
If $s= 2t +1 \ge 3$  and
$$
\frac{1}{n}\binom{2t+n}{2t+1}> 2 \binom{n+t}{t},
$$
or $s=2t\ge 4$ and
$$
\frac{1}{n}\binom{2t+n-1}{2t}> 2 \binom{n+t-1}{t-1}+\binom{n-1+t}{t},
$$

\noindent
then the generic form in  $n$ variables of
degree $s$ defines an Artinian Gorenstein
algebra that is not the Artinian reduction of a set of $\rho_n(s)$ Gorenstein points of $\mathbb P^n_{\res}$.
\end{proposition}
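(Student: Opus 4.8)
The plan is to argue by contradiction, playing off the rigidity of the inverse system description in \propref{reductionPoints} against the Alexander--Hirschowitz lower bound for the generic Waring rank recorded in \propref{AlxHir}.

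Start with a generic form $F$ of degree $s$ in the $n$ variables $y_0,\dots ,y_{n-1}$, and let $A=R/J$ with $J^{\perp}=\langle F\rangle$ be the associated Artinian Gorenstein $\res$-algebra (so $R$ is the corresponding $n$-variable polynomial ring, playing the role of the ring obtained after killing the linear non-zero divisor $x_n$). Because $F$ is generic, $A$ is compressed, that is $\HF_A(i)=\min\{\binom{n-1+i}{i},\binom{n-1+s-i}{s-i}\}$; summing this symmetric Hilbert function by means of the identity $\sum_{i\le m}\binom{n-1+i}{i}=\binom{n+m}{m}$ gives $\length(A)=\rho(s)$, which is $2\binom{n+j}{j}$ when $s=2j+1$ and $2\binom{n+j-1}{j-1}+\binom{n-1+j}{j}$ when $s=2j$ (see \cite{Iar84}).

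Now suppose, for contradiction, that $A$ is the Artinian reduction, with respect to some linear non-zero divisor, of a set $X=\{P_1,\dots ,P_r\}$ of Gorenstein points of $\mathbb P^n_{\res}$, with associated linear forms $L_1,\dots ,L_r$. Since the reduction is taken modulo a non-zero divisor of the one-dimensional Cohen--Macaulay ring $R/I(X)$, one has $r=\deg(R/I(X))=\length(A)=\rho(s)$. On the other hand, \propref{reductionPoints} yields scalars $\alpha_1,\dots ,\alpha_r\in\res^{*}$ with $F=\sum_{i=1}^{r}\alpha_i L_i^{s}$, so $F$ is a sum of $r$ powers of linear forms; as $F$ is generic, its Waring rank is exactly the generic rank, whence $r\ge G(s)$. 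Transcribing \propref{AlxHir} to $n$ variables gives $G(s)\ge\frac1n\binom{n-1+s}{s}$, and since $\binom{n-1+s}{s}$ equals $\binom{2j+n}{2j+1}$ for $s=2j+1$ and $\binom{2j+n-1}{2j}$ for $s=2j$, we obtain
$$\rho(s)=r\ge G(s)\ge\frac1n\binom{n-1+s}{s},$$
which contradicts the displayed hypothesis in each parity case. Hence a generic $F$ of degree $s$ is not the Artinian reduction of any set of Gorenstein points in $\mathbb P^n_{\res}$.

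I expect no genuine obstacle: the statement is essentially a bookkeeping consequence of \thmref{char-points-gor}/\propref{reductionPoints} together with \propref{AlxHir}. The only points requiring care are the translation between the $(n+1)$-variable convention of \propref{AlxHir} and the $n$-variable setting here (together with the attendant binomial identities), the evaluation of $\rho(s)$ by the summation identity above, and the remark that the genericity of $F$ must be used twice at once, both to force $A$ to be compressed and to force $\rank(F)=G(s)$, so one should work on the intersection of the two dense open loci, which is again dense open.
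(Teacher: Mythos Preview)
Your argument is correct and is essentially the paper's own proof, carried out with more detail: both contrapose, use \propref{reductionPoints} (the paper cites \thmref{char-points-gor}, which amounts to the same thing here) to get a length-$\rho(s)$ Waring decomposition of $F$, and then invoke the Alexander--Hirschowitz bound $G(s)\ge\frac1n\binom{n-1+s}{s}$ in $n$ variables to reach the contradiction $\rho(s)\ge G(s)$. Your explicit remarks on the variable shift and on using genericity twice are welcome clarifications the paper leaves implicit.
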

\begin{proof}

\medskip
\noindent
 Let $F $  a generic form of degree $s  $ in $n$ variables, then  by Macaulay's correspondence  it defines a compressed Gorenstein algebra $B$ of length $\rho_n(s).$  By Proposition \ref{reductionPoints} and Remark \ref{How}, for being $B$ the Artinian reduction of a set $X$ of $\rho_n(s)$ distinct points in $\mathbb P^n_{\res}, $ then $F$ must decompose as sum of $\rho_n(s)$ $s$-powers of linear forms.    Under our assumptions we will prove that $G_n(s) > \rho_n(s)  $ and hence the result.
First, we prove this inequality for $s=2 t+1\ge 3$.
By the assumptions  we have
$$
G_n(s)\ge \frac{1}{n}\binom{2t+n}{2t+1}> 2 \binom{n+t}{t}=\rho_n(s).
$$

\noindent
Assume now that $s=2j\ge 3$, then
$$
G_n(s)\ge \frac{1}{n}\binom{2t+n-1}{2t}>
2 \binom{n+t-1}{t-1}+\binom{n-1+t}{t}=\rho_n(s).
$$
\end{proof}

\medskip
We remark  that, according to Proposition \ref{NoLifGor},   given an integer $s$, we may compute an integer $n_0$ such that for all $n \ge n_0$  a generic form of degree $s$ in $n$ indeterminates    defines an Artinian  Gorenstein
algebra which  is not the Artinian  reduction of a set of $\rho_n(s)$ Gorenstein points of $\mathbb P^n_{\res}.$

\vskip 2mm
Here a table with the computation of such integer $n_0.$

\medskip
\begin{center}
\begin{tabular}{|c| c c c c c c c c c c|}
 \hline
 $s=2t+1$ & 3 & 5 & 7 &$\cdots$& 27& 29&$\cdots$&87& 89
 &$\cdots$\\
 \hline
 $n_0$  & 11 & 8 & 7&$\cdots$&7 & 8& $\cdots$&8 & 9 &$\cdots$
\\ \hline \hline
 $s=2t$ & 4 & 6 & 8 &$\cdots$& 24& 26&$\cdots$&86& 88&$\cdots$\\
 \hline
 $n_0$  & 11 & 8 & 7&$\cdots$&7 & 8& $\cdots$&8 & 9 &$\cdots$\\
 \hline
\end{tabular}
\end{center}

\medskip
\noindent
In particular  the assumptions of \propref{NoLifGor} hold true if $n\ge \Max\{t^2+4t+5, 11\}$, with $t=[s/2]$,  but this   bound is in general far to be the optimal.

\bigskip
\bibliographystyle{amsplain}

\end{document}